\documentclass[a4paper,11pt]{article}
\usepackage[latin1]{inputenc}
\usepackage[english]{babel}
\usepackage{amsmath}
\usepackage{amsfonts}
\usepackage{amssymb}
\usepackage{epsfig}
\usepackage{amsopn}
\usepackage{amsthm}
\usepackage{color}
\usepackage{graphicx}
\usepackage{subfigure}
\usepackage{enumerate}
\setlength{\oddsidemargin}{0.25in} \addtolength{\hoffset}{0cm}
\addtolength{\textwidth}{2.5cm} \addtolength{\voffset}{-1cm}
\addtolength{\textheight}{1cm}
\newtheorem{theorem}{Theorem}[section]

\newtheorem{lemma}[theorem]{Lemma}
\newtheorem{proposition}[theorem]{Proposition}
\newtheorem{definition}[theorem]{Definition}

\newtheorem*{theorem*}{Theorem}
\newtheorem*{lemma*}{Lemma}
\newtheorem*{remark*}{Remark}
\newtheorem*{definition*}{Definition}
\newtheorem*{proposition*}{Proposition}
\newtheorem*{corollary*}{Corollary}
\numberwithin{equation}{section}
%

\newcommand{\real}{\mathbb{R}}



\let\ced=\c         







\def\qed{\,\unskip\kern 6pt \penalty 500
\raise -2pt\hbox{\vrule \vbox to8pt{\hrule width 6pt
\vfill\hrule}\vrule}\par}
\definecolor{darkblue}{rgb}{0.05, .05, .65}
\definecolor{darkgreen}{rgb}{0.1, .65, .1}
\definecolor{darkred}{rgb}{0.8,0,0}
\newcommand{\beqn}{\begin{equation}}
\newcommand{\eeqn}{\end{equation}}
\newcommand{\bear}{\begin{eqnarray}}
\newcommand{\eear}{\end{eqnarray}}
\newcommand{\bean}{\begin{eqnarray*}}
\newcommand{\eean}{\end{eqnarray*}}
%


\begin{document}

\title{\huge \bf A special self-similar solution and existence of global solutions for a reaction-diffusion equation with Hardy potential}

\author{
\Large Razvan Gabriel Iagar\,\footnote{Departamento de Matem\'{a}tica
Aplicada, Ciencia e Ingenieria de los Materiales y Tecnologia
Electr\'onica, Universidad Rey Juan Carlos, M\'{o}stoles,
28933, Madrid, Spain, \textit{e-mail:} razvan.iagar@urjc.es},\\
[4pt] \Large Ariel S\'{a}nchez,\footnote{Departamento de Matem\'{a}tica
Aplicada, Ciencia e Ingenieria de los Materiales y Tecnologia
Electr\'onica, Universidad Rey Juan Carlos, M\'{o}stoles,
28933, Madrid, Spain, \textit{e-mail:} ariel.sanchez@urjc.es}\\
[4pt] }
\date{}
\maketitle

\begin{abstract}
Existence and uniqueness of a specific self-similar solution is established for the following reaction-diffusion equation with Hardy singular potential
$$
\partial_tu=\Delta u^m+|x|^{-2}u^p, \qquad (x,t)\in \real^N\times(0,\infty),
$$
in the range of exponents $1\leq p<m$ and dimension $N\geq3$. The self-similar solution is unbounded at $x=0$ and has a logarithmic vertical asymptote, but it remains bounded at any $x\neq0$ and $t\in(0,\infty)$ and it is a weak solution in $L^1$ sense, which moreover satisfies $u(t)\in L^p(\real^N)$ for any $t>0$ and $p\in[1,\infty)$. As an application of this self-similar solution, it is shown that there exists at least a weak solution to the Cauchy problem associated to the previous equation for any bounded, nonnegative and compactly supported initial condition $u_0$, contrasting with previous results in literature for the critical limit $p=m$.
\end{abstract}

\

\noindent {\bf Mathematics Subject Classification 2020:} 35A24, 35B44, 35C06,
35K10, 35K57, 35K65.

\smallskip

\noindent {\bf Keywords and phrases:} reaction-diffusion equations, existence of solutions, global solutions, singular potential, Hardy-type equations, self-similar solutions.

\section{Introduction}

The goal of this paper is to establish some properties related, first, to self-similar solutions and second, to general solutions to the Cauchy problem associated to the following reaction-diffusion equation involving a Hardy potential
\begin{equation}\label{eq1}
\partial_tu=\Delta u^m+|x|^{-2}u^p, \qquad (x,t)\in\real^N\times(0,\infty),
\end{equation}
posed in the range of exponents $1\leq p<m$ and dimension $N\geq3$ (as usual when considering Hardy potentials). In the study of the general Cauchy problem, we consider the class of bounded, non-negative and compactly supported initial conditions
\begin{equation}\label{init.cond}
u(x,0)=u_0(x), \qquad u_0\in L^{\infty}(\real^N), \qquad {\rm supp}\,u_0\subseteq B(0,R), \qquad u_0\geq0, \ u_0\not\equiv0.
\end{equation}
for some $R>0$. The definition of a solution to the Cauchy problem \eqref{eq1}-\eqref{init.cond} will be made precise in the text.

The study of parabolic reaction-diffusion equations involving a singular potential, that is, a spatially-dependent coefficient of the form $|x|^{-\sigma}$ with $\sigma>0$ at the reaction term, was brought into attention by the classical paper by Baras and Goldstein \cite{BG84}, in which the problem of existence of solutions to the following linear equation
\begin{equation}\label{eq2}
\partial_tu=\Delta u+K|x|^{-2}u,
\end{equation}
with $K>0$ and posed in a bounded domain and with homogeneous boundary conditions, is considered. The most striking result of this well-known work is that existence depends strongly on the constant $K>0$, thus for a wide class of non-negative initial conditions, a weak solution to Eq. \eqref{eq2} exists if and only if $K\leq (N-2)^2/4$, which is the optimal constant in Hardy's inequality. More precisely, for larger constants $K>(N-2)^2/4$ there are no solutions except for the trivial one, as it is proved that any solution will present \emph{complete instantaneous blow-up}, which means that it becomes infinite at any $x\in\real^N$ and at any time $t>0$. Cabr\'e and Martel \cite{CM99} considered general potentials $a(x)\in L^1_{\rm loc}(\Omega)$ instead of $|x|^{-2}$ (where $\Omega\subset\real^N$ is a bounded domain) and raised the same problem of the threshold between existence and non-existence of solutions to Eq. \eqref{eq2}. Their study gives a sharp condition for the non-existence of solutions in terms of the form of the spectrum of the (linear) operator $-\Delta-a(x)$. The connections between the Hardy inequality and the properties of solutions to the heat equation with a Hardy potential were later developed in \cite{VZ00, VZ12}, making use of more refined functional inequalities. Another extension of the study performed for Eq. \eqref{eq2} to more general linear equations is the outcome of the paper by Goldstein and Zhang \cite{GZ02}, where the problem of existence and non-existence of solutions is addressed for linear parabolic operators with the Laplacian replaced by an operator with variable coefficients in the leading order.

The development of the theory for parabolic equations with singular potentials saw recently a great amount of papers dealing with the semilinear version of Eq. \eqref{eq1}, that is, letting in the latter $m=1$ and any $p>1$. More precisely, a number works considered the question of existence of solutions for the semilinear equation
\begin{equation}\label{eq.semi}
u_t=\Delta u+K|x|^{\sigma}u^p, \qquad p>1, \ -2<\sigma<0,
\end{equation}
in suitable functional spaces, with weakly regular initial conditions (see works such as \cite{BSTW17, BS19, CIT21a, CIT21b, T20}) or with singular data \cite{HT21}. The same question of existence of global solutions is studied with fractional diffusion in \cite{HS21, HT21}. Deeper properties of the dynamics of Eq. \eqref{eq.semi}, such as similarity solutions and behavior near blow-up for general solutions, have been considered first by Filippas and Tertikas \cite{FT00}. In this work, the authors proved that for $\sigma\in(-2,0)$ there exists a unique, decreasing blow-up self-similar solution to Eq. \eqref{eq.semi} provided that $1<p<p_S=(N+2+2\sigma)/(N-2)$ and then an infinity of such self-similar solutions for $p_S<p<p_{JL}$, where $p_{JL}$ is a larger critical exponent (with a complicated explicit expression that we omit here) known as the Joseph-Lundgren exponent. Their analysis has been extended and completed by Mukai and Seki \cite{MS21} with a study of the range $p>p_{JL}$, where the so-called \emph{Type II blow-up} occurs and in which exact blow-up rates and asymptotic expansions in suitable regions of the space are obtained. Since Eq. \eqref{eq.semi} still allows to use techniques of convolution with the heat kernel and representation formulas for solutions, many techniques employed in all these works are not suitable for quasilinear equations.

Entering the quasilinear world, we start our description of precedents from papers such as \cite{Qi98, Su02}, which consider always $p>m$ and a singular potential $|x|^{\sigma}$ with $\sigma>-2$ (which could be either negative or positive) and establish the Fujita-type exponent $p_F(\sigma)=m+(2+\sigma)/N$ by analogy with to the famous Fujita work \cite{Fu66}. The super-critical fast diffusion $(N-2)/N<m<1$ is also considered. It is thus proved in \cite{Qi98} that, for $\max\{1,m\}<p\leq p_F(\sigma)$ all the non-trivial solutions blow up in finite time, while for $p>p_F(\sigma)$ global in time solutions in self-similar form are constructed, while \cite{Su02} studies finite time blow-up and global existence in dependence on the spatial decay as $|x|\to\infty$ of the initial condition when $p>p_F(\sigma)$. Later on, different quasilinear diffusion operators were considered together with Hardy-type potentials: fast diffusion $0<m<1$ in \cite{GK03, GGK05}, $p$-Laplacian diffusion in \cite{AP00} and doubly nonlinear diffusion in \cite{Ko04}.

A remarkable feature of the equations involving Hardy-type potentials (such as the ones introduced above) is that we do not see only a competition between the diffusion and the reaction terms for influencing their dynamics, but also a \emph{second competition between regions} with $|x|$ small and regions with $|x|$ large, due to the presence of a singular weight strongly acting close to $x=0$ but having a very low influence far-away. An example of the effects of such a competition has been given recently by the authors for Eq. \eqref{eq1} in the borderline case $p=m$ and with the Hardy potential $K|x|^{-2}$, $0<K\leq(N-2)^2/4$, in the short note \cite{IS20b}. By means of a transformation to the porous medium equation, an interesting case of continuation after blow-up is found: all the solutions blow up (either instantaneously or in finite time) only at $x=0$, but they keep belonging to the functional spaces allowing for the development of the weak theory for any $t>0$. The optimal Hardy constant $K=(N-2)^2/4$ limits the existence and non-existence ranges in this specific quasilinear case treated in \cite{IS20b} in the same way as it did for the linear case in \cite{BG84}. In another recent contribution \cite{IMS21b}, a unique self-similar solution presenting grow-up as $t\to\infty$ but not finite time blow-up is constructed for the equation
\begin{equation}\label{eq1bis}
\partial_tu=\Delta u^m+|x|^{\sigma}u^p, \qquad (x,t)\in\real^N\times(0,\infty),
\end{equation}
in the range of exponents $-2<\sigma<0$ and $1\leq p<1-\sigma(m-1)/2<m$, but the limiting case $\sigma=-2$ is not considered there. We may thus say that the current paper completes the theory established in these previous works, but with a different outcome if compared to them.

\medskip

\noindent \textbf{Main results.} Our first goal is to construct a specific self-similar solution to Eq. \eqref{eq1}. Recalling the more general form \eqref{eq1bis} (where $\sigma=-2$ in the case of Eq. \eqref{eq1}), we have seen in previous works that an essential number for the expected type and behavior of the self-similar solution (according to \cite{IS21a, IMS22, IMS21b}) is
$$
L=\sigma(m-1)+2(p-1)=2(p-m)<0,
$$
in our range of exponents $p\in[1,m)$, thus we expect again to have self-similar solutions in forward form (if any) with grow-up as $t\to\infty$, that is, in the general form $u(x,t)=t^{\alpha}f(|x|t^{-\beta})$ for some $\alpha>0$, $\beta>0$ to be determined. These self-similarity exponents (whose general form is given in, for example, \cite{IMS21b}) become for $\sigma=-2$
$$
\alpha=-\frac{\sigma+2}{\sigma(m-1)+2(p-1)}=0, \qquad \beta=-\frac{m-p}{\sigma(m-1)+2(p-1)}=\frac{1}{2}.
$$
It follows that self-similar solutions to Eq. \eqref{eq1} (if existing) are expected to have the form
\begin{equation}\label{SSS}
u(x,t)=f(\xi), \qquad \xi=|x|t^{-1/2},
\end{equation}
where the self-similar profile $f(\xi)$ is a solution to the following differential equation
\begin{equation}\label{SSODE}
(f^m)''(\xi)+\frac{N-1}{\xi}(f^m)'(\xi)+\frac{1}{2}\xi f'(\xi)+\xi^{-2}f(\xi)^p=0.
\end{equation}
We are now in a position to state our first main result, concerning the uniqueness of a specific self-similar solution with compact support to Eq. \eqref{eq1}.
\begin{theorem}\label{th.uniqSS}
Let $m>1$, $p\in[1,m)$ and $N\geq3$. There exists a unique compactly supported self-similar solution (in the sense of Definition \ref{def.sol} below) of the form \eqref{SSS} to Eq. \eqref{eq1}. Its self-similar profile is supported on an interval $[0,\xi_0]$ with $\xi_0\in(0,\infty)$, and has the following local behavior at its endpoints
\begin{equation}\label{beh.Q1}
f(\xi)\sim\left[-\frac{m-p}{m(N-2)}\ln\,\xi+K\right]^{1/(m-p)}, \qquad {\rm as} \ \xi\to0, \ \xi>0,
\end{equation}
for some $K\in\real$, and
\begin{equation}\label{beh.P1}
f(\xi)\sim\left[\frac{m-1}{4m}(\xi_0^2-\xi^2)\right]_{+}^{1/(m-1)}, \qquad {\rm as} \ \xi\to\xi_0\in(0,\infty), \ \xi<\xi_0.
\end{equation}
\end{theorem}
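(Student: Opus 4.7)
The plan is to split the analysis into a local study near the two endpoints $\xi=0$ and $\xi=\xi_0$, followed by a shooting argument that selects the unique profile matching both. A short formal calculation justifies the leading-order ansatz at $\xi=0$: with $f(\xi)\sim[A|\ln\xi|+K]^{1/(m-p)}$, the two terms $(f^m)''+\frac{N-1}{\xi}(f^m)'$ and $\xi^{-2}f^p$ of \eqref{SSODE} are both of order $\xi^{-2}[A|\ln\xi|]^{p/(m-p)}$ as $\xi\to0$, while $\tfrac{1}{2}\xi f'$ is strictly subdominant, and matching the leading coefficients forces $A=(m-p)/(m(N-2))$. The factor $N-2$ in the denominator is exactly what requires the restriction $N\geq3$. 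The formal expansion would then be promoted to a rigorous statement via a contraction-mapping argument for the remainder $f^m-[A|\ln\xi|+K]^{m/(m-p)}$ in a suitable weighted Banach space, producing for each $K\in\real$ a local positive solution $f_K$ of \eqref{SSODE} on some $(0,\delta_K)$ satisfying \eqref{beh.Q1}.

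Near an interface $\xi_0\in(0,\infty)$ the behavior \eqref{beh.P1} is the standard compactly supported porous-medium profile: writing the ODE in the variable $g=f^{m-1}$, one checks that $g(\xi)\sim\tfrac{m-1}{4m}(\xi_0^2-\xi^2)$ solves the leading-order free-boundary problem at $\xi_0$, and a standard perturbation argument then gives, for each $\xi_0>0$, a local classical solution on $(\xi_0-\delta,\xi_0)$ with this profile, extended by zero for $\xi>\xi_0$. Note that the Hardy term $\xi^{-2}f^p$ is smooth and bounded near a positive $\xi_0$, so it plays no special role in the interface analysis and the usual porous-medium free-boundary theory applies.

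The global part is a shooting argument. Each $f_K$ extends as a positive classical solution of \eqref{SSODE} on a maximal interval $(0,\xi_*(K))$, with either $\xi_*(K)=\infty$ or $f_K(\xi_*(K))=0$; in the latter case the trajectory automatically has the interface profile \eqref{beh.P1}, by the local theory of the previous paragraph together with standard ODE uniqueness away from $\xi=0$. I would then establish continuity and strict monotonicity of $K\mapsto\xi_*(K)$ by showing that the variational equation for $\partial_Kf_K$ has no sign change on $(0,\xi_*(K))$, through a Sturm-type comparison exploiting the degenerate form of the diffusion. Combined with an examination of the two limits $K\to\pm\infty$, the intermediate-value theorem then isolates the unique $K^*$ producing a compactly supported profile. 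The main obstacle is this last step: because the Hardy reaction $\xi^{-2}f^p$ with $1\leq p<m$ is a genuine sublinear perturbation of the pure porous-medium self-similar equation, the a priori estimates needed to control the two limit regimes $K\to\pm\infty$ and to guarantee strict monotonicity of $\xi_*(K)$ will require a careful combined use of the explicit logarithmic expansion at $\xi=0$ and of integral identities obtained by testing \eqref{SSODE} against suitable multipliers; a secondary but nontrivial check is that the resulting profile indeed produces a weak self-similar solution in the sense of the Definition referenced in the statement, which reduces to verifying local $L^1$-integrability of $f$ and $f^p$ near $\xi=0$ from \eqref{beh.Q1}.
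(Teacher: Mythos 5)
Your formal asymptotics at both endpoints are correct (the balance forcing $A=(m-p)/(m(N-2))$ at $\xi=0$ and the porous-medium interface expansion at $\xi_0$ match \eqref{beh.Q1}--\eqref{beh.P1}), but the global shooting argument has a genuine flaw: it is not true that a profile reaching zero at a finite $\xi_*(K)$ ``automatically has the interface profile \eqref{beh.P1}''. A solution coming down from the logarithmic asymptote can hit zero transversally, with $(f^m)'(\xi_*(K))<0$, and then it is a sign-change profile, not an admissible compactly supported one; extending it by zero does not yield a weak solution. The paper's phase-space analysis makes this precise: after the change of variables \eqref{PS.change}, the orbits leaving the point $Q_1$ (logarithmic behavior) can end either at $P_0$ ($f\to K>0$ as $\xi\to\infty$), at $Q_3$ (zero with negative slope of $f^m$), or at $P_1$ (the interface \eqref{beh.P1}), and Propositions \ref{prop.shootw0} and \ref{prop.shootz0} show that the first two alternatives each occur on nonempty open subsets of the shooting parameter; the compactly supported profile is obtained precisely on the nonempty closed set separating them. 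Your two-way dichotomy collapses the $Q_3$ and $P_1$ cases into one, so the intermediate-value argument as you set it up does not select the desired profile.

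The uniqueness step is also not carried out: continuity and strict monotonicity of $K\mapsto\xi_*(K)$ via the variational equation and a Sturm-type comparison is only a hope, and for this degenerate quasilinear ODE with the $\xi^{-2}$ weight the linearized problem has singular, sign-indefinite coefficients, so no off-the-shelf Sturm theory applies. The paper instead proves ordering of the profiles in $K$ by a rescaling-and-sliding argument (the rescaled profiles $f_\lambda$, $\lambda\in(0,1)$, are strict supersolutions because $2(p-m)/(m-1)<0$, which rules out interior contact points), and then the remaining possible contact exactly at the edge of the support is excluded by passing to the time-dependent solutions, introducing a small time delay, readjusting $\lambda$, and applying the comparison principle in an exterior domain where the Hardy weight is bounded; letting $t\to\infty$ contradicts the optimality of the sliding parameter. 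Some substitute for this mechanism (or a genuinely worked-out monotonicity of $\xi_*(K)$, together with the analysis of the limits $K\to\pm\infty$ you defer) is needed before your outline yields either existence or uniqueness. The final integrability check you mention is indeed easy, since the singularity in \eqref{beh.Q1} is only logarithmic.
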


We notice that the self-similar solution given by Theorem \ref{th.uniqSS} is not a ``standard" solution at $x=0$, since it has a vertical asymptote at $\xi=0$, as it follows from the local behavior \eqref{beh.Q1}, but which is of logarithmic type, thus belonging to any $L^p$ space for $p\in[1,\infty)$. We thus need to introduce a formulation of the notion of solution to Eq. \eqref{eq1}. This is the content of the next definition, denoting in it and in the sequel by $u(t)$ the mapping $x\mapsto u(x,t)$ for a fixed $t\geq0$. We recall here that sometimes this type of solution is referred as \emph{very weak solution}, since we pass all the Laplacian to the test function.
\begin{definition}\label{def.sol}
By a \emph{weak solution to Eq. \eqref{eq1}} we understand a function $u\in C((0,T):L^{1}(\real^N))$ for some $T>0$, which moreover satisfies the following assumptions:
\begin{itemize}
\item $$
u(t), \ u^m(t)\in L^{1}(\real^N), \qquad \frac{u^p(t)}{|x|^2}\in L^1(\real^N), \qquad {\rm for \ any } \ t\in(0,T).
$$
\item $u$ is a solution in the sense of distributions to Eq. \eqref{eq1}, that means that for any $\varphi\in C_0^{2,1}(\real^N\times(0,T))$ and for any $t_1$, $t_2\in(0,T)$ with $t_1<t_2$ we have
\begin{equation}\label{weak}
\begin{split}
\int_{\real^N}u(t_2)\varphi(t_2)\,dx&-\int_{\real^N}u(t_1)\varphi(t_1)\,dx-\int_{t_1}^{t_2}\int_{\real^N}u(t)\varphi_t(t)\,dx\,dt\\
&-\int_{t_1}^{t_2}\int_{\real^N}u^m(t)\Delta\varphi(t)\,dx\,dt=\int_{t_1}^{t_2}\int_{\real^N}\frac{u^p(t)}{|x|^2}\varphi(t)\,dx\,dt.
\end{split}
\end{equation}
\end{itemize}
We say that a function $u\in C([0,T):L^{1}(\real^N))$ for some $T>0$ is a \emph{weak solution to the Cauchy problem} \eqref{eq1}-\eqref{init.cond} if $u$ is a weak solution to Eq. \eqref{eq1} and the initial condition is taken in $L^1$ sense, that is $u(t)\to u_0$ as $t\to0$ with convergence in $L^1(\real^N)$.
\end{definition}
The functional framework in Definition \ref{def.sol} is inspired by the one introduced in \cite[Section 6.2 and Problem 6.2]{VPME} for the standard porous medium equation, but with the adaptation required to cope with the reaction term including a singular potential at $x=0$. We emphasize here that with this notion of solution, we refrain from entering the rather technical problem of convergence of the gradients of the approximating family of solutions in Section \ref{sec.wp}. Of course, the self-similar solution in Theorem \ref{th.uniqSS} is unbounded at $x=0$ but it stays bounded at any $t>0$ and at any $x$ with $|x|>0$, as it is readily seen from \eqref{SSS}, and it is easily shown that it fulfills the conditions of Definition \ref{def.sol}. Such a situation of solutions having a single blow-up point at the origin but remaining a weak solution to the equation (in suitable Lebesgue or Sobolev spaces) at any $t>0$ has been also met in the limiting case $p=m$ to Eq. \eqref{eq1} in \cite{IS20b}. Let us finally mention here that, as a by-product of the techniques we use, we classify in the body of this paper \emph{all the possible local behaviors} of self-similar profiles solving Eq. \eqref{SSODE}.

Using the self-similar solution obtained in Theorem \ref{th.uniqSS} in the form of a ``friendly giant", that is, a very large supersolution which allows for bounds from above and dominated convergence in approximating processes (see \cite[Section 5.9]{VPME} for the classical notion of ``friendly giant" for the porous medium equation), we can establish existence of at least a weak solution for any bounded and compactly supported initial condition.
\begin{theorem}\label{th.exist}
Let $m>1$, $p\in[1,m)$ and $N\geq3$ and let $u_0$ be a function satisfying \eqref{init.cond}. Then the Cauchy problem \eqref{eq1}-\eqref{init.cond} has at least a weak solution in the sense of Definition \ref{def.sol}.
\end{theorem}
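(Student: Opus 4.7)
The plan is to construct the weak solution as the monotone pointwise limit of a family of approximating classical solutions, using the compactly supported self-similar solution from Theorem \ref{th.uniqSS} as a universal ``friendly giant'' upper barrier.

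First, I would regularize in two ways: replace the singular weight by the bounded truncation $V_\varepsilon(x):=\min\{|x|^{-2},1/\varepsilon\}$, which is non-decreasing as $\varepsilon\to0$, and mollify $u_0$ to obtain $u_{0,\varepsilon}\in C_c^\infty(\real^N)$ chosen monotone non-decreasing in $\varepsilon\to 0$, with $0\leq u_{0,\varepsilon}\leq \|u_0\|_\infty+1$, $\supp u_{0,\varepsilon}\subseteq B(0,2R)$, and $u_{0,\varepsilon}\to u_0$ in $L^1(\real^N)$. For the regularized Cauchy problem
$$
\pa_tu_\varepsilon=\Delta u_\varepsilon^m+V_\varepsilon(x)u_\varepsilon^p,\qquad u_\varepsilon(\cdot,0)=u_{0,\varepsilon},
$$
the classical theory of the porous medium equation with a bounded, locally Lipschitz reaction term (as in \cite{VPME}) supplies a unique bounded non-negative weak solution $u_\varepsilon$ together with a comparison principle.

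Next, I would deploy $U(x,t):=f(|x|t^{-1/2})$ from Theorem \ref{th.uniqSS} as a supersolution: since $V_\varepsilon\leq|x|^{-2}$, $U$ is a supersolution of each regularized problem on $\{t>0\}$. By \eqref{beh.Q1}, $f(\xi)\to\infty$ as $\xi\to0^+$, so by continuity there is some $\delta=\delta(M)>0$ with $f\geq M:=\|u_0\|_\infty+1$ on $(0,\delta]$; picking $\tau\geq(2R/\delta)^2$ then gives $U(x,\tau)=f(|x|\tau^{-1/2})\geq M\geq u_{0,\varepsilon}(x)$ for $0<|x|\leq 2R$, while outside this ball both sides vanish identically. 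A time-shift comparison therefore yields
$$
0\leq u_\varepsilon(x,t)\leq U(x,t+\tau),\qquad (x,t)\in\real^N\times(0,\infty),
$$
uniformly in $\varepsilon$. Combining this with the monotonicity $V_\varepsilon\nearrow|x|^{-2}$ and the monotone choice of $u_{0,\varepsilon}$, a second application of comparison forces $\{u_\varepsilon\}$ to be monotone non-decreasing in $\varepsilon\to 0$, so that $u(x,t):=\lim_{\varepsilon\to 0}u_\varepsilon(x,t)$ exists pointwise and satisfies $u\leq U(\cdot,t+\tau)$.

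Finally, I would pass to the limit in the weak formulation \eqref{weak}. The majorant $U(\cdot,t+\tau)$ belongs to every $L^q(\real^N)$, as noted in the abstract, and the logarithmic singularity of \eqref{beh.Q1} combined with $N\geq 3$ ensures that $U(\cdot,t+\tau)^p/|x|^2\in L^1(\real^N)$, since the integral $\int_0^1|\ln r|^{p/(m-p)}r^{N-3}\rd r$ converges. Monotone (or dominated) convergence then lets me pass to the limit in \eqref{weak} for all $\varphi\in C_0^{2,1}(\real^N\times(0,T))$, and the integrability requirements of Definition \ref{def.sol} are inherited from the pointwise domination by $U(\cdot,t+\tau)$. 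Continuity $u\in C([0,T):L^1(\real^N))$ and the attainment of the initial datum follow from standard $L^1$-contraction estimates for the approximating sequence together with Fatou's lemma. The principal obstacle is the passage to the limit of the reaction term $V_\varepsilon u_\varepsilon^p$ near $x=0$, where no gradient bounds are available; the friendly giant converts this into a one-line dominated-convergence argument, and it is exactly the integrability $U^p/|x|^2\in L^1$ for $N\geq 3$ that fails in the critical case $p=m$ of \cite{IS20b}, explaining the sharp contrast between those results and the present one.
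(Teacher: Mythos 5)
Your proposal is correct and follows essentially the same route as the paper: regularize the Hardy potential by a bounded family increasing monotonically to $|x|^{-2}$, obtain a monotone family of approximate solutions by comparison, dominate them uniformly by a time-shifted copy of the self-similar ``friendly giant'' from Theorem \ref{th.uniqSS}, and pass to the limit in the very weak formulation by dominated/monotone convergence. The only differences are cosmetic (the paper truncates via $(|x|+\epsilon)^{-2}$ and keeps $u_0$ unmollified, citing LSU/Andreucci--DiBenedetto for the regularized problems), and your explicit check that $U^p/|x|^2\in L^1$ for $N\geq3$ matches the paper's remark on integrability.
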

As a consequence of this theorem, we obtain that there is no constant limiting between existence and non-existence of solutions. More precisely

\medskip

\noindent \textbf{Remark.} Let $m>1$, $p\in[1,m)$ and $N\geq3$ and let $u_0$ be a function satisfying \eqref{init.cond}. For any $K>0$, the Cauchy problem associated to the equation
\begin{equation}\label{eq1.K}
\partial_tu=\Delta u^m+K|x|^{-2}u^p, \qquad (x,t)\in\real^N\times(0,\infty),
\end{equation}
with initial condition $u(x,0)=u_0(x)$ for any $x\in\real^N$, admits at least a weak solution in the sense of Definition \ref{def.sol}. This fact follows readily by employing a scaling argument. Indeed, by setting
\begin{equation}\label{scale.K}
u(x,t)=\lambda^{1/(m-1)}v(x,\lambda t), \qquad s=\lambda t, \ \lambda=K^{(m-1)/(m-p)}
\end{equation}
we find that $v$ introduced in \eqref{scale.K} solves Eq. \eqref{eq1} with an initial condition $v_0(x)=K^{-1/(m-p)}u_0(x)$, which also satisfies \eqref{init.cond}. The existence Theorem \ref{th.exist} can be then applied and we get the desired result by undoing the rescaling \eqref{scale.K}.

\medskip

This is a striking difference with the well-known result of instantaneous complete blow-up established by Baras and Goldstein \cite{BG84} for the linear case $m=p=1$ when $K>(N-2)^2/4$ and also noticed for the other limiting case $p=m>1$ in \cite{IS20b}. Indeed, the fact that $p\neq m$ is fundamental for removing this upper bound on the constant $K$ for existence of weak solutions, since it allows for a rescaling of Eq. \eqref{eq1.K} which makes it equivalent to Eq. \eqref{eq1}, that is with $K=1$, while such a rescaling is not available when $p=m$. As for the proof of Theorem \ref{th.exist}, it is a constructive one based on an approximation with a family of solutions to regular problems, and the ``friendly giant" coming from Theorem \ref{th.uniqSS} will be decisive in ensuring dominated convergence of the approximating family.

\medskip

\noindent \textbf{Structure of the paper}. The proof of Theorem \ref{th.uniqSS} is based on a change of variable transforming Eq. \eqref{SSODE} into a quadratic autonomous dynamical system and then allowing to employ the technique of a phase-space analysis to study the orbits connecting critical points, which will be equivalent to self-similar profiles with specified local behavior. Such a technique had been used recently with success by the authors in classifying self-similar solutions to reaction-diffusion equations, see for example works such as \cite{IS19, IS21a, IMS22, IS22}. We thus devote two different sections for the preliminary analysis, a Section \ref{sec.local} concerning the local analysis of the finite critical points, followed by Section \ref{sec.infty} devoted to the critical points at infinity, where the claimed logarithmic local behavior at $\xi=0$ is identified. Existence and uniqueness of self-similar solutions comes next and is split again into two Sections \ref{sec.exist} and \ref{sec.uniq}. Finally, a single Section \ref{sec.wp} is dedicated to the construction of the approximating solutions and the proof of Theorem \ref{th.exist}. The paper ends with a short section where open problems are raised and new developments of the theory of Eq. \eqref{eq1} are suggested.

\section{The dynamical system. Local analysis}\label{sec.local}

We begin here our analysis of the differential equation \eqref{SSODE} solved by the self-similar profiles to Eq. \eqref{eq1}. The following easy result will be very useful in the forthcoming analysis.
\begin{lemma}\label{lem.max}
A self-similar profile to Eq. \eqref{eq1} cannot have positive local minima. In particular, any profile $f(\xi)$ decreasing for a short interval $\xi\in(0,\delta)$ will decrease on all its positivity set.
\end{lemma}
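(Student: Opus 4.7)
The plan is to test the profile equation (\ref{SSODE}) directly at a hypothetical positive local minimum and read off the signs of each term. Suppose for contradiction that $f$ attains a positive local minimum at some $\xi_*>0$, so that $f(\xi_*)>0$, $f'(\xi_*)=0$ and $f''(\xi_*)\ge 0$. The chain rule applied to $f^m$ immediately yields $(f^m)'(\xi_*)=0$ and $(f^m)''(\xi_*)=m f^{m-1}(\xi_*)f''(\xi_*)\ge 0$. Substituting these into (\ref{SSODE}) at $\xi=\xi_*$, the first three terms on the left-hand side are non-negative, while the reaction contribution $\xi_*^{-2}f(\xi_*)^p$ is strictly positive, forcing the left-hand side to be strictly positive. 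This contradicts the equation, so no such positive local minimum can exist.

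For the ``in particular'' statement, I would define $\xi_0:=\sup\{\xi>0 : f \text{ is non-increasing on } (0,\xi)\}$ and argue that if $\xi_0$ were strictly inside the positivity set of $f$, then $f$ would start increasing immediately after $\xi_0$ (otherwise we could extend the non-increasing interval), producing a positive local minimum at $\xi_0$, which is ruled out by the first part. Hence the decreasing character propagates until $f$ reaches the boundary of its positivity set.

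The lemma is essentially a one-point sign analysis plus monotonicity bookkeeping, so there is no real obstacle. The only mildly delicate point is ensuring that the monotonicity cannot break down in some pathological way, for instance on a set where $f$ is locally constant or where the classical derivatives are ambiguous. However, since (\ref{SSODE}) is a non-degenerate second-order ODE wherever $f>0$, standard ODE regularity and uniqueness ensure that $f$ is smooth on its positivity set and cannot be locally constant without being globally zero, so the pointwise sign argument is fully justified.
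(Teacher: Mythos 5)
Your argument is correct and is essentially the paper's own proof: evaluating \eqref{SSODE} at a hypothetical positive local minimum, where $f'=0$ and $(f^m)''\geq0$, forces the strictly positive reaction term $\xi^{-2}f^p$ to make the left-hand side positive, a contradiction. The extra monotonicity bookkeeping for the ``in particular'' part is fine and matches what the paper leaves implicit.
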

\begin{proof}
Assume for contradiction that $\xi_1\in(0,\infty)$ is a local minimum point. We infer from evaluating Eq. \eqref{SSODE} at $\xi=\xi_1$ that
$$
(f^m)''(\xi_1)=-\xi_1^{-2}f(\xi_1)^p<0,
$$
which is a contradiction with the standard properties of a minimum point.
\end{proof}
We introduce now the following change of variable
\begin{equation}\label{PS.change}
X(\xi)=m\xi^{-2}f(\xi)^{m-1}, \qquad Y(\xi)=\frac{m}{\xi}f(\xi)^{m-2}f'(\xi), \qquad Z(\xi)=\xi^{-2}f(\xi)^{p-1},
\end{equation}
together with the new independent variable $\eta$ defined via the differential equation
$$
\frac{d\eta}{d\xi}=\frac{1}{m}\xi f(\xi)^{1-m}.
$$
This change of variable converts Eq. \eqref{SSODE} into the following quadratic autonomous system
\begin{equation}\label{PSsyst1}
\left\{\begin{array}{ll}\dot{X}=X[(m-1)Y-2X],\\
\dot{Y}=-Y^2-\frac{1}{2}Y-NXY-XZ,\\
\dot{Z}=Z[(p-1)Y-2X],\end{array}\right.
\end{equation}
where the dot derivatives are taken with respect to the independent variable $\eta$. Moreover, we observe that $X\geq0$, $Z\geq0$ and the coordinate planes $\{X=0\}$ and $\{Z=0\}$ are invariant for the system \eqref{PSsyst1}. Another simple but useful remark is that the flow of the system \eqref{PSsyst1} over the plane $\{Y=0\}$ has always negative direction. The system \eqref{PSsyst1} has the following finite critical points
$$
P_0=(0,0,0), \ P_1=\left(0,-\frac{1}{2},0\right), \ P^{\gamma}=(0,0,\gamma), \ {\rm for} \ \gamma>0,
$$
which will be analyzed below.
\begin{lemma}[Local analysis near $P_0$]\label{lem.P0}
The critical point $P_0$ behaves like an attractor for orbits coming from the half-space $\{X>0\}$ of the phase space associated to the system \eqref{PSsyst1}. The orbits entering it contain profiles with the local behavior
\begin{equation}\label{beh.P0}
\lim\limits_{\xi\to\infty}f(\xi)=K>0,
\end{equation}
for any constant $K>0$.
\end{lemma}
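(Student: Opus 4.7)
The plan is to treat $P_0$ as a non-hyperbolic equilibrium of \eqref{PSsyst1}, perform a two-dimensional center manifold reduction, and then translate the phase-plane picture back to the profile $f$ through the change of variable \eqref{PS.change}. A direct calculation shows that the Jacobian of \eqref{PSsyst1} at $P_0$ is diagonal with entries $(0,-1/2,0)$, with eigenvectors along the coordinate axes; hence $P_0$ possesses a one-dimensional local stable manifold tangent to the $Y$-axis (which sits outside the positivity region of profiles) and a two-dimensional local center manifold $W^{c}_{\mathrm{loc}}(P_0)$ tangent to the $(X,Z)$-plane. Orbits coming from $\{X>0\}$ that approach $P_0$ must therefore be exponentially attracted to $W^{c}_{\mathrm{loc}}(P_0)$, the stable eigenvalue $-1/2$ guaranteeing this attraction by the standard center manifold theorem.

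Next I would compute $W^{c}_{\mathrm{loc}}(P_0)$ to leading order as a graph $Y = h(X,Z)$ with $h(0,0)=0$ and $\nabla h(0,0)=0$. Substituting into the invariance equation $h_X \dot X + h_Z \dot Z = -h^2 - h/2 - NXh - XZ$ and matching the quadratic terms in $(X,Z)$ forces $h(X,Z) = -2XZ + O((|X|+|Z|)^3)$. The reduced flow on $W^{c}_{\mathrm{loc}}(P_0)$ then reads $\dot X = -2X^2 + O(X^2 Z)$ and $\dot Z = -2XZ + O(XZ^2)$, so that in the open region $\{X>0,\,Z\geq 0\}$ both $X$ and $Z$ decrease monotonically to zero as $\eta\to+\infty$. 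Dividing the two equations gives $dX/dZ = X/Z$ to leading order, whence the incoming orbits form a one-parameter family tangent to the rays $X = cZ$, parametrized by $c>0$.

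Translating back to profiles, the identity $X/Z = m f^{m-p}$ combined with $X/Z \to c$ yields $f(\xi) \to K := (c/m)^{1/(m-p)} > 0$, and $K$ sweeps the whole of $(0,\infty)$ as $c$ does. Since $d\eta/d\xi = \xi f^{1-m}/m$ is strictly positive and, when $f\to K$, integrates to $\eta \sim K^{1-m}\xi^2/(2m)$ at infinity, the limit $\eta\to+\infty$ corresponds precisely to $\xi\to+\infty$, which establishes the local behavior \eqref{beh.P0}. The main obstacle is the rigor of the center manifold step: one must invoke the center manifold theorem to produce $W^{c}_{\mathrm{loc}}(P_0)$, justify that the leading expansion $h \approx -2XZ$ suffices to control the qualitative dynamics on it, and deal separately with the invariant coordinate planes $\{X=0\}$ and $\{Z=0\}$, where the corresponding degenerate profiles ($f\equiv 0$) must be ruled out. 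Once these points are settled, the passage back to $f(\xi)$ is purely algebraic.
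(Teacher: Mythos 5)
Your proposal is correct and follows essentially the same route as the paper: center manifold reduction at $P_0$ with the quadratic approximation $Y=h(X,Z)\approx -2XZ$, the reduced system $\dot X\approx-2X^2$, $\dot Z\approx-2XZ$ giving orbits tangent to rays $X=cZ$, and the identity $X/Z=mf^{m-p}$ to conclude $f\to K>0$. The only cosmetic difference is how you identify the limit as $\xi\to\infty$ (via the $\eta$--$\xi$ relation rather than from $X=m\xi^{-2}f^{m-1}\to0$ with $f\to K$), which is equally valid.
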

\begin{proof}
The linearization of the system \eqref{PSsyst1} in a neighborhood of $P_0$ has the matrix
$$
M(P_0)=\left(
         \begin{array}{ccc}
           0 & 0 & 0 \\
           0 & -\frac{1}{2} & 0 \\
           0 & 0 & 0 \\
         \end{array}
       \right),
$$
thus we have a two-dimensional center manifold and a one-dimensional stable manifold. According to \cite[Lemma 1, Section 2.4]{Carr}, all the orbits entering or going out of the critical point except for the trivial one are tangent to the center manifold. The Center Manifold Theorem \cite[Theorem 1, Section 2.12]{Pe} together with the approximation theorem \cite[Theorem 3, Section 2.5]{Carr} ensure that the center manifold near $P_0$ is well approximated by a quadratic expansion of the form
$$
Y=h(X,Z)=aX^2+bXZ+cZ^2+O(|(X,Z)|^3),
$$
with coefficients $a$, $b$, $c$ to be determined from the equation of the center manifold. Straightforward calculations lead to $a=c=0$ and $b=-2$, thus
\begin{equation}\label{interm1}
h(X,Z)=-2XZ+XO(|(X,Z)|)^2,
\end{equation}
where the fact that all the terms in the center manifold are a multiple of $X$ follows readily from the equation for the center manifold. According thus to the reduction theorem \cite[Theorem 2, Section 2.4]{Carr}, the flow on the center manifold is given by the reduced system obtained by replacing $Y$ by $h(X,Z)$ in the equations for $\dot{X}$ and $\dot{Z}$ in \eqref{PSsyst1}, namely
\begin{equation}\label{interm2}
\left\{\begin{array}{ll}\dot{X}&=-2X^2+X^2O(|(X,Z)|),\\
\dot{Z}&=-2XZ+XO(|(X,Z)|^2),\end{array}\right.
\end{equation}
thus all the orbits enter the critical point $P_0$ on the center manifold. The local behavior of the profiles contained in these orbits is obtained in a first approximation by an integration in the reduced system \eqref{interm2}, leading to $X\sim KZ$ for any constant $K>0$, which readily gives $f(\xi)\sim K$ after undoing the change of variable \eqref{PS.change}. Moreover, since on these orbits $X\to0$ and $Z\to0$, we infer from \eqref{PS.change} and the fact that $f(\xi)\sim K$ that such local behavior is taken as $\xi\to\infty$, as claimed.
\end{proof}
\begin{lemma}[Local analysis near $P_1$]\label{lem.P1}
The system \eqref{PSsyst1} has in a neighborhood of the critical point $P_1$ a two-dimensional stable manifold and a one-dimensional unstable manifold. The orbits entering $P_1$ on the stable manifold contain profiles with an interface behavior at some point $\xi_0\in(0,\infty)$ given by \eqref{beh.P1}.
\end{lemma}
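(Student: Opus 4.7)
The plan is to adapt the phase-space technique already used successfully for $P_0$: first carry out the linearization at $P_1$ to read off the manifold dimensions, and then translate the convergence $Y(\eta)\to-1/2$ back through the change of variable \eqref{PS.change} to identify the interface profile \eqref{beh.P1}.

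First I would compute the Jacobian of the system \eqref{PSsyst1} at $P_1=(0,-1/2,0)$. A direct calculation shows that the mixed partials involving the $X$ and $Z$ coordinates vanish at $P_1$ (since $X=Z=0$), so the matrix is lower triangular with diagonal entries
$$\lambda_1=(m-1)Y\big|_{Y=-1/2}=-\frac{m-1}{2},\qquad \lambda_2=-2Y-\frac{1}{2}\Big|_{Y=-1/2}=\frac{1}{2},\qquad \lambda_3=(p-1)Y\big|_{Y=-1/2}=-\frac{p-1}{2}.$$
For $p>1$ all three eigenvalues are nonzero, giving a hyperbolic point with two negative and one positive eigenvalue, whence a two-dimensional stable manifold and a one-dimensional unstable manifold by the Stable Manifold Theorem. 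The case $p=1$ is slightly degenerate because $\lambda_3=0$; but then $Z(\xi)=\xi^{-2}$ is no longer dynamical (it is fully determined by $\xi$), and the analysis reduces to a planar system in $(X,Y)$, whose linearization at $(0,-1/2)$ has the hyperbolic eigenvalues $\lambda_1,\lambda_2$, yielding one stable and one unstable direction; together with the trivial ``decrease of $Z$'' this gives the same $(2,1)$ splitting.

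Second, I would translate the asymptotics on the stable manifold back to the profile. An orbit entering $P_1$ satisfies $X(\eta)\to 0$, $Y(\eta)\to-1/2$, $Z(\eta)\to 0$ as $\eta\to+\infty$. Rewriting $Y$ using \eqref{PS.change},
$$Y=\frac{m}{\xi}f^{m-2}f'=\frac{m}{(m-1)\xi}(f^{m-1})',$$
so the condition $Y\to-1/2$ becomes $(f^{m-1})'(\xi)\sim-\frac{m-1}{2m}\xi$. Integrating from $\xi$ to a value $\xi_0$ at which $f$ vanishes yields
$$f^{m-1}(\xi)\sim\frac{m-1}{4m}(\xi_0^{2}-\xi^{2}),$$
which is exactly \eqref{beh.P1} after raising to the power $1/(m-1)$. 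Consistency with $X\to 0$ follows because $X=m\xi^{-2}f^{m-1}$ vanishes whenever $f\to 0$ at $\xi_0>0$, and likewise for $Z$.

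Finally, I would verify that $\eta\to+\infty$ really corresponds to a \emph{finite} positive $\xi_0$, so that the orbit describes a profile with a genuine interface. From $d\eta/d\xi=\xi f^{1-m}/m$ and the asymptotics above, the integrand behaves like $C(\xi_0-\xi)^{-1}$ as $\xi\to\xi_0^-$, whose integral diverges; hence the stable trajectory is traversed as $\xi$ approaches a finite endpoint $\xi_0\in(0,\infty)$. Combined with Lemma \ref{lem.max}, which forbids positive minima and thus rules out a re-entry into positivity past $\xi_0$, this shows the profile is genuinely compactly supported with interface at $\xi_0$. The main obstacle I foresee is the careful bookkeeping of the degenerate case $p=1$, where a two-dimensional stable subspace has to be recovered despite the vanishing eigenvalue $\lambda_3$; apart from that, the rest of the argument is a direct linear analysis plus a routine integration of the asymptotic relations.
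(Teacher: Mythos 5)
Your linearization step coincides with the paper's: the Jacobian at $P_1$ is indeed lower triangular with eigenvalues $-(m-1)/2$, $1/2$, $-(p-1)/2$, and for $p>1$ the $(2,1)$ splitting follows from the Stable Manifold Theorem, exactly as in the paper. The genuine gap is in the passage from $Y\to-1/2$, $X\to0$, $Z\to0$ to the interface: you integrate ``from $\xi$ to a value $\xi_0$ at which $f$ vanishes'', thereby \emph{assuming} that the limiting behavior is attained at a finite $\xi_0$ where $f$ vanishes, and your closing paragraph only checks consistency (that $\eta\to+\infty$ as $\xi\to\xi_0^-$) using asymptotics that were derived under that very assumption — it is circular and never excludes the alternative that the orbit enters $P_1$ with $\xi\to\infty$ and $f>0$ throughout. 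This exclusion is the one non-routine step of the paper's proof: there it is done by noting $\xi X'(\xi)=-2X(\xi)+(m-1)Y(\xi)$ and invoking \cite[Lemma 2.9]{IL13}, which produces a sequence $\xi_k\to\infty$ along which $Y(\xi_k)\to0$, contradicting $Y\to-1/2$. (A direct alternative you could have used: if $Y(\xi)\to-1/2$ as $\xi\to\infty$, then $(f^{m-1})'(\xi)\leq-\tfrac{m-1}{4m}\xi$ for all large $\xi$, and integrating forces $f^{m-1}$ to become negative at a finite $\xi$, a contradiction with $f\geq0$.) Once $\xi\to\xi_0\in(0,\infty)$ is established, $X\to0$ gives $f(\xi_0)=0$ and your integration yields \eqref{beh.P1}; without it, the lemma's conclusion about an interface at a finite point is not proved.

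A secondary point: your treatment of $p=1$ is not correct. For $p=1$ the point $P_1$ is not hyperbolic and is not isolated — it sits on the line of equilibria $\{(0,-1/2,\gamma):\gamma\geq0\}$ — so one cannot recover ``the same $(2,1)$ splitting'' by declaring $Z$ non-dynamical (the equation $\dot Z=-2XZ$ does not decouple into a planar system). The paper handles this case separately (Lemma \ref{lem.P1p1}): each $P_1^{\gamma}$ carries a one-dimensional stable, a one-dimensional unstable and a one-dimensional center manifold, and the interface behavior \eqref{beh.P1} is still read off from $Y\to-1/2$, $X\to0$ on the stable orbits. So the hyperbolic argument you give is valid only for $p>1$, and the degenerate case needs the center-manifold statement rather than your reduction.
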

\begin{proof}
The linearization of the system \eqref{PSsyst1} in a neighborhood of $P_1$ has the matrix
$$
M(P_1)=\left(
         \begin{array}{ccc}
           -\frac{m-1}{2} & 0 & 0 \\
           \frac{N}{2} & \frac{1}{2} & 0 \\
           0 & 0 & -\frac{p-1}{2} \\
         \end{array}
       \right),
$$
thus the two-dimensional stable manifold and the one-dimensional unstable manifold are obvious. The local behavior near $P_1$ is given by the fact that $Y\to-1/2$ on the orbits entering $P_1$, together with the fact that $X\to0$ and $Z\to0$. If this behavior would be taken as $\xi\to\infty$, then by writing $Y(\xi)=m\xi^{-1}(f^{m-1})'(\xi)/(m-1)$, the fact that
$$
\xi X'(\xi)=-2X(\xi)+(m-1)Y(\xi)
$$
together with an application of \cite[Lemma 2.9]{IL13} for the function $X(\xi)$ would imply that there exists a sequence $\xi_k\to\infty$ such that $Y(\xi_k)\to0$ and a contradiction to the fact that $Y(\xi)\to-1/2$. We thus deduce that the local behavior is taken, in terms of profiles, as $\xi\to\xi_0\in(0,\infty)$ from the left, which gives first that $f(\xi_0)=0$ and then
$$
(f^{m-1})'(\xi)\sim\frac{(m-1)\xi}{2m}, \qquad {\rm as} \ \xi\to\xi_0,
$$
whence the local behavior given by \eqref{beh.P1} follows by integration on a generic interval $(\xi,\xi_0)$.
\end{proof}
We are left with the critical half-line containing the points $P^{\gamma}$ with $\gamma>0$. We shall see that such points cannot contain any orbit of interest for us.
\begin{lemma}[Local analysis near $P^{\gamma}$]\label{lem.Pgamma}
There are no interesting profiles $f(\xi)$ contained in orbits of the system \eqref{PSsyst1} connecting to or from any of the points $P^{\gamma}$ with $\gamma>0$.
\end{lemma}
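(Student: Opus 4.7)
The plan is to combine the linearization of \eqref{PSsyst1} at $P^\gamma=(0,0,\gamma)$ with a direct algebraic interpretation of any limit via the change of variables \eqref{PS.change}. A short computation shows that the Jacobian of the system at $P^\gamma$ has characteristic polynomial $-\lambda^2(\lambda+1/2)$, so the critical point exhibits a one-dimensional stable manifold and a two-dimensional center manifold. Moreover, the stable eigenvector computes to $(0,1,-2(p-1)\gamma)$, whose first coordinate vanishes. Since $\{X=0\}$ is an invariant plane for \eqref{PSsyst1} and corresponds to the non-physical trivial profile $f\equiv 0$, the one-dimensional stable manifold is forced to lie entirely inside $\{X=0\}$ and therefore contains no orbit of interest.

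I would then turn to orbits with $X>0$ that accumulate at $P^\gamma$: such orbits must do so along the center manifold. Rather than computing the reduced dynamics in detail, I exploit the algebraic content of \eqref{PS.change}. The identity $X/Z=mf^{m-p}$, together with $X\to 0$, $Z\to\gamma>0$ and the assumption $m>p$, immediately forces $f\to 0$ along any such orbit. Substituting this information into $Z=\xi^{-2}f^{p-1}\to\gamma$ separates two regimes: for $p>1$ one is forced to have $\xi\to 0$ with $f(\xi)\sim(\gamma\xi^2)^{1/(p-1)}$; for $p=1$ the coordinate $Z$ reduces to $\xi^{-2}$, pinning $\xi\to 1/\sqrt{\gamma}$ while $f\to 0$ at this interior point.

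The final step is to rule out both behaviors as non-interesting for Theorem \ref{th.uniqSS}. For $p>1$ the profile vanishes at the origin with a power rate, so it cannot match the logarithmic vertical asymptote \eqref{beh.Q1}; in addition, since $Y\to 0$ enforces $(f^{m-1})'\to 0$, the resulting profile also fails to match the interface form \eqref{beh.P1} (which requires $Y\to -1/2$, i.e.\ the critical point $P_1$). For $p=1$, $f$ vanishes at an interior point $\xi^\ast=1/\sqrt{\gamma}$ again with $(f^{m-1})'(\xi^\ast)=0$, which is incompatible with the interface behavior, and Lemma \ref{lem.max} prevents the profile from being continued as a positive profile past $\xi^\ast$ in a way consistent with a compactly supported self-similar solution.

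The main obstacle will be the direction-of-flow argument attached to these candidate orbits: the identity $d\eta/d\xi=\xi f^{1-m}/m>0$ implies that, along the orbits identified above, $\eta\to-\infty$ as $\xi$ approaches its limit point, so that $P^\gamma$ can only be approached in backward $\eta$-time. Making this fully rigorous (and thus excluding even a tangential contribution of $P^\gamma$ to the profile we construct) is somewhat delicate because the eigenvalue $0$ is non-simple, so the center manifold carries a Jordan block and its leading-order reduction must be checked explicitly; however, once this is combined with the algebraic consequences of \eqref{PS.change} listed above, the conclusion that no orbit entering or leaving $P^\gamma$ carries a profile of the type sought in Theorem \ref{th.uniqSS} follows.
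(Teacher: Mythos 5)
Your linear analysis at $P^\gamma$ is correct: the characteristic polynomial is indeed $-\lambda^2(\lambda+\tfrac12)$, the eigenvector of $\lambda=-\tfrac12$ is $(0,1,-2(p-1)\gamma)$, the stable manifold lies in the invariant plane $\{X=0\}$, and the algebraic consequences of \eqref{PS.change} (namely $X/Z=mf^{m-p}\to0$, hence $f\to0$, and then $\xi\to0$ with $f\sim(\gamma\xi^2)^{1/(p-1)}$ if $p>1$, or $\xi\to\gamma^{-1/2}$ if $p=1$) are also right. However, the step that actually carries the lemma is missing. What the paper establishes (through the proof it cites) is a genuine \emph{non-existence} statement: no orbit with $X>0$ connects to or from $P^\gamma$, in either time direction. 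This comes precisely from the reduced flow on the two-dimensional center manifold, which you explicitly decline to compute and later concede ``must be checked explicitly'': to leading order $Y=-2\gamma X+\dots$ on the center manifold, so the reduced system reads $\dot X=-2[(m-1)\gamma+1]X^2+\dots$ and $\dot Z=-2\gamma[(p-1)\gamma+1]X+\dots$; hence $dZ/dX\sim c/X$ with $c>0$, so $Z$ would have to change by an unbounded amount as $X\to0^+$, and no orbit with $X>0$ can converge to $P^\gamma$ forward in time, while backward in time $X$ moves away from $0$. Your substitute for this computation, the heuristic that $\eta\to-\infty$ along the identified behaviors, is only justified for $p>1$ (where the rate $f\sim(\gamma\xi^2)^{1/(p-1)}$ is known) and is not justified for $p=1$: whether $\eta\to+\infty$ at $\xi^*=\gamma^{-1/2}$ depends on the vanishing rate of $f$ there, which is exactly what the uncomputed reduced flow would determine.

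Second, even granting your behavioral conclusions, declaring the limiting profiles ``non-interesting'' does not do the job the lemma is asked to do. In the existence proof, orbits with $C\in\mathcal{B}$ are forced into $P_1$ only because no other critical point can receive orbits from $\{X>0\}$; and Theorem \ref{th.uniqSS} asserts uniqueness among \emph{all} compactly supported self-similar solutions. For $p=1$, a forward connection from $Q_1$ into some $P^\gamma$ would give a profile with behavior \eqref{beh.Q1} at $\xi=0$ vanishing at the interior point $\xi^*=\gamma^{-1/2}$ with $Y\to0$, i.e.\ $(f^m)'(\xi^*)=0$; extended by zero, this would be a second compactly supported weak self-similar solution with a degenerate contact. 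Observing that it fails \eqref{beh.P1} does not exclude it, and Lemma \ref{lem.max} is irrelevant here (there is no positive minimum, and the natural continuation is by zero). Likewise for $p>1$, an orbit going out of $P^\gamma$ with $f\sim(\gamma\xi^2)^{1/(p-1)}$ that later entered $P_1$ would again be a compactly supported solution competing with the constructed one; saying it does not match \eqref{beh.Q1} does not rule it out. In all of these cases what is needed, and what the cited proof supplies, is that such orbits do not exist at all; your proposal does not establish this.
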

The proof is rather tedious but follows absolutely analogous steps as in \cite[Lemma 2.3]{IMS21b}, the fundamental aspect of the non-existence of orbits entering these points being the fact that $\sigma(m-1)+2(p-1)<0$, which holds true in our case as $\sigma=-2$ and $p<m$. We thus omit the calculations and refer the interested reader to imitate step by step the proof of \cite[Lemma 2.3]{IMS21b} given with all the details in the reference, following exactly the same changes of variable as there to reach the conclusion.

\medskip

\noindent \textbf{Differences for $p=1$.} In the case $p=1$ a difference with respect to the analysis of the critical point $P_1$ appears, since this single critical point is replaced for $p=1$ by the critical line
$$
P_1^{\gamma}=\left(0,-\frac{\beta}{\alpha},\gamma\right), \qquad \gamma>0.
$$
\begin{lemma}[Analysis of the points $P_1^{\gamma}$ for $p=1$]\label{lem.P1p1}
For any $\gamma>0$, the critical point $P_1^{\gamma}$ has a one-dimensional stable manifold, a one-dimensional unstable manifold and a one-dimensional center manifold. The orbits entering $P_1^{\gamma}$ on the stable manifold contain profiles with interface behaving as in \eqref{beh.P1}, while both the center manifold and the unstable manifold are contained completely in the invariant plane $\{X=0\}$.
\end{lemma}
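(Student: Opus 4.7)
The plan is as follows. First I would specialize \eqref{PSsyst1} to $p=1$, which turns the third equation into $\dot Z = -2XZ$, and then compute the linearization of the full system at $P_1^\gamma = (0,-1/2,\gamma)$. A direct calculation shows this linearization to be lower-triangular with diagonal entries $-(m-1)/2$, $1/2$ and $0$ (the zero coming precisely from the fact that all partial derivatives of $-2XZ$ vanish on the critical line), which immediately yields the one-dimensional stable, unstable and center directions claimed in the statement.

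Next I would determine the associated eigenvectors. The eigenvector for $1/2$ turns out to be parallel to $(0,1,0)$ and the one for $0$ parallel to $(0,0,1)$, so both lie in the plane $\{X=0\}$, which is invariant for the full flow thanks to $\dot X = X[(m-1)Y-2X]$. Since the critical half-line $\{P_1^\gamma:\gamma>0\}$ is itself an invariant curve of equilibria tangent at each point to the center eigenspace, it automatically serves as a center manifold through every $P_1^\gamma$; and the one-dimensional unstable manifold, tangent to $(0,1,0)$ and trapped by invariance inside $\{X=0\}$, is then also entirely contained in this plane. Because $X(\xi)=m\xi^{-2}f(\xi)^{m-1}$ can vanish identically only if $f\equiv 0$, I would then conclude that no orbit in either manifold encodes a non-trivial self-similar profile.

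For the stable manifold, the eigenvector for $-(m-1)/2$ is proportional to $(1,(2\gamma-N)/m,4\gamma/(m-1))$, which has non-zero $X$-component; so orbits approaching $P_1^\gamma$ along the $X>0$ side produce genuine profiles, and I would establish their interface behavior exactly as in the proof of Lemma \ref{lem.P1}: from $Y\to-1/2$ and $X\to 0$, the option $\xi\to\infty$ is ruled out by applying \cite[Lemma 2.9]{IL13} to $X(\xi)$, forcing $\xi\to\xi_0\in(0,\infty)$ with $f(\xi_0)=0$ and yielding \eqref{beh.P1} by integration. The specialization $p=1$ supplies the extra identity $Z(\xi)=\xi^{-2}$, so $Z\to\gamma$ pins down $\xi_0=1/\sqrt{\gamma}$ and identifies $\gamma$ as the parameter indexing the admissible interface positions. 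The only point requiring care is the identification of the center manifold, but here it comes for free from the presence of a whole line of equilibria, which bypasses the formal reduction procedure used in Lemma \ref{lem.P0}.
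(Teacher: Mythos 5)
Your proposal is correct and takes essentially the same route as the paper, which settles this lemma by observing that $Y\to-1/2$ and $X\to0$ still hold on the incoming orbits (so the interface behavior follows exactly as in Lemma \ref{lem.P1}) and defers the eigenvalue bookkeeping to \cite[Lemma 2.2]{IS19}; your explicit computation (eigenvalues $-(m-1)/2$, $1/2$, $0$, eigenvectors $(1,(2\gamma-N)/m,4\gamma/(m-1))$, $(0,1,0)$, $(0,0,1)$, the line of equilibria as center manifold, and $\xi_0=1/\sqrt{\gamma}$ from $Z=\xi^{-2}$) supplies precisely those details. One small slip worth noting: the zero eigenvalue does not come from all partials of $-2XZ$ vanishing on the critical line (indeed $\partial_X(-2XZ)=-2\gamma\neq0$ there), but from the Jacobian being lower triangular with $(3,3)$ entry $-2X=0$; this does not affect your conclusions.
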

The proof is easy as the local behavior of the profiles is obtained in the same way as in Lemma \ref{lem.P1} since it still holds true that $Y\to-1/2$ and $X\to0$ on these orbits. See also \cite[Lemma 2.2]{IS19} for more details.

\section{Local analysis of the critical points at infinity}\label{sec.infty}

This section is devoted to the local analysis of the critical points of the system \eqref{PSsyst1} at the infinity of the space. This analysis is needed in order to complete the panorama of all possible behaviors of the profiles $f(\xi)$ solutions to Eq. \eqref{SSODE}. We pass to the Poincar\'e hypersphere through the new variables $(\overline{X},\overline{Y},\overline{Z},W)$ defined as
$$
X=\frac{\overline{X}}{W}, \ Y=\frac{\overline{Y}}{W}, \ Z=\frac{\overline{Z}}{W}
$$
and we infer from standard theory \cite[Theorem 4, Section 3.10]{Pe} that the critical points at space infinity lie on the equator of the hypersphere, thus at points
$(\overline{X},\overline{Y},\overline{Z},0)$ where $\overline{X}^2+\overline{Y}^2+\overline{Z}^2=1$ and the following system is satisfied:
\begin{equation}\label{Poincare}
\left\{\begin{array}{ll}\overline{X}[\overline{X}\overline{Z}+(N-2)\overline{X}\overline{Y}+m\overline{Y}^2]=0,\\
(p-m)\overline{X}\overline{Y}\overline{Z}=0,\\
\overline{Z}[p\overline{Y}^2+(N-2)\overline{X}\overline{Y}+\overline{X}\overline{Z}]=0,\end{array}\right.
\end{equation}
Taking into account that we are considering only points with coordinates $\overline{X}\geq0$ and $\overline{Z}\geq0$ and that we are working in dimension $N\geq3$, we find the following critical points on the Poincar\'e hypersphere:
\begin{equation*}
\begin{split}
&Q_1=(1,0,0,0), \ \ Q_{2,3}=(0,\pm1,0,0), \ \ Q_4=(0,0,1,0), \\
&Q_5=\left(\frac{m}{\sqrt{(N-2)^2+m^2}},-\frac{N-2}{\sqrt{(N-2)^2+m^2}},0,0\right).
\end{split}
\end{equation*}
The analysis of the critical point $Q_1$ will be of utmost importance for the rest of the paper. Both critical points $Q_1$ and $Q_5$ can be analyzed by projecting on the $X$ variable according to the theory in \cite[Section 3.10]{Pe}. More precisely, \cite[Theorem 5(a), Section 3.10]{Pe} ensures that, if we let
\begin{equation}\label{change.inftyX}
y=\frac{Y}{X}, \qquad z=\frac{Z}{X}, \qquad w=\frac{1}{X},
\end{equation}
the critical points at infinity $Q_1$ and $Q_5$ can be identified with the critical points $(0,0,0)$, respectively $(-(N-2)/m,0,0)$ of the following system
\begin{equation}\label{systinf1}
\left\{\begin{array}{ll}\dot{y}=-(N-2)y-z-my^2-\frac{1}{2}yw,\\
\dot{z}=-(m-p)yz,\\
\dot{w}=2w-(m-1)yw,\end{array}\right.
\end{equation}
obtained by plugging \eqref{change.inftyX} into the equations of the system \eqref{PSsyst1}.
\begin{lemma}[Local analysis near $Q_1$]\label{lem.Q1}
The critical point $Q_1$ presents a two-dimensional center-unstable manifold and a one-dimensional stable manifold. The orbits going out on the center-unstable manifold contain profiles with the local behavior \eqref{beh.Q1} as $\xi\to0$, while the stable manifold is contained in the invariant plane $\{w=0\}$.
\end{lemma}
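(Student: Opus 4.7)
The plan is to linearize the system \eqref{systinf1} at the origin $(y,z,w)=(0,0,0)$, which is the image of $Q_1$ under the projection \eqref{change.inftyX}, and read off the dimensions of the invariant manifolds from the eigenvalues. A direct computation gives the upper-triangular Jacobian
\begin{equation*}
J(Q_1)=\begin{pmatrix} -(N-2) & -1 & 0 \\ 0 & 0 & 0 \\ 0 & 0 & 2 \end{pmatrix},
\end{equation*}
with eigenvalues $-(N-2)<0$, $0$, and $2>0$ (recall $N\geq 3$). By standard invariant manifold theory this immediately yields a one-dimensional stable manifold and a two-dimensional center-unstable manifold, as claimed.

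For the location of the stable manifold, I would use invariance of the coordinate planes. The stable eigenvector associated to $-(N-2)$ is $(1,0,0)$, which lies in $\{z=0\}\cap\{w=0\}$. Since $\dot{z}=-(m-p)yz$ makes $\{z=0\}$ invariant and $\dot{w}=w(2-(m-1)y)$ makes $\{w=0\}$ invariant, uniqueness of the stable manifold passing through $Q_1$ forces it to be contained in both planes, in particular in $\{w=0\}$.

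For the local behavior along orbits going out on the center-unstable manifold, I combine dynamical and direct asymptotic information. The center eigenvector is $(1,-(N-2),0)$ and the unstable one is $(0,0,1)$, so the center-unstable manifold is tangent to their span, giving at leading order $y\approx -z/(N-2)$; recalling $y=\xi f'/f$ and $z=f^{p-m}/m$ via \eqref{PS.change} and \eqref{change.inftyX}, this already encodes the correct balance between $f'/f$ and $f^{p-m}$. To pin down the explicit leading expression I would substitute the ansatz $f(\xi)=A(\xi)^{1/(m-p)}$ with $A\to\infty$ into \eqref{SSODE}; after dividing by $A^{p/(m-p)}$, the terms that survive as $\xi\to 0$ form the linear Cauchy--Euler equation
\begin{equation*}
\frac{m}{m-p}A''(\xi)+\frac{(N-1)m}{(m-p)\,\xi}A'(\xi)+\frac{1}{\xi^2}=0,
\end{equation*}
whose general solution near $\xi=0$ is $A(\xi)\sim -\frac{m-p}{m(N-2)}\ln\xi+K$ for $K\in\real$, giving exactly \eqref{beh.Q1}. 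A final check that this profile indeed corresponds to an orbit entering $Q_1$ as $\xi\to 0^+$ is immediate: $z=1/(mA)\to 0$, $y=-z/(N-2)+o(z)\to 0$, and $w=\xi^2/(mf^{m-1})\to 0$ because $f^{m-1}$ grows as a power of $|\ln\xi|$ while $\xi^2\to 0$.

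The main obstacle will be rigorously verifying that \eqref{beh.Q1} captures precisely the profiles sitting on the center-unstable manifold, by checking that the nonlinear terms discarded from \eqref{SSODE} in the ansatz derivation are genuinely subleading with respect to the retained Cauchy--Euler balance (the quadratic term $\frac{mp}{(m-p)^2}(A')^2/A$ and the drift term of order $\xi A^{(1-p)/(m-p)}A'$ both decay faster than $\xi^{-2}$). The constant $K$ corresponds to the one-parameter freedom along the center direction of the manifold, while the unstable $w$-direction merely selects the direction of flow along the orbit as $\xi\to 0^+$; this is what allows one to conclude that all profiles living on the center-unstable manifold share the same logarithmic leading asymptotics.
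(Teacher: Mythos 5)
Your linearization, eigenvalue count, and the argument locating the stable manifold on the invariant $y$-axis (hence inside $\{w=0\}$) coincide with the paper's and are correct. Where you genuinely diverge is in how \eqref{beh.Q1} is obtained: the paper sets $g=(N-2)y+z$, computes the center manifold of \eqref{systinf1} to quadratic order, and integrates the approximating reduced system $\dot z=\frac{m-p}{N-2}z^2$, $\dot w=2w+\frac{m-1}{N-2}zw$ on the center-unstable manifold, obtaining the one-parameter family of trajectories \eqref{trajec}, which translated back into profile variables gives \eqref{beh.Q1}. This buys something you do not get from your route: the explicit parametrization by $C$ in \eqref{trajec} is precisely what the shooting argument of Section \ref{sec.exist} is built on, so the reduced-system integration is not merely a device to produce the asymptotics.

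The substantive gap is one of logical direction, and your closing paragraph does not close it. Your ansatz $f=A^{1/(m-p)}$ produces candidate profiles with behavior \eqref{beh.Q1} and you verify that they correspond to $(y,z,w)\to(0,0,0)$ tangentially to the center-unstable directions; but the lemma asserts that \emph{every} orbit going out of $Q_1$ on that manifold carries a profile behaving as in \eqref{beh.Q1}. Checking that the discarded terms of \eqref{SSODE} are subleading only confirms self-consistency of your ansatz; it neither rules out other behaviors on the manifold nor upgrades the formal expansion to actual orbits, and the appeal to ``one-parameter freedom along the center direction'' is a heuristic, not a proof. A clean way to repair it within your setup: any outgoing orbit not contained in $\{z=0\}$ lies on a graph $y=-z/(N-2)+O(z^2+w^2)$, and since $w$ vanishes exponentially while $z$ vanishes only algebraically when flowing backwards into $Q_1$, one gets $y\sim -z/(N-2)$, i.e. $\xi f'(\xi)/f(\xi)\sim -f(\xi)^{p-m}/m(N-2)$, which integrates to $f(\xi)^{m-p}\sim K-\frac{m-p}{m(N-2)}\ln\xi$ -- essentially the paper's computation in disguise. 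Finally, a smaller inaccuracy: the general solution of your Cauchy--Euler equation also contains the homogeneous mode $\xi^{-(N-2)}$, which must be explicitly discarded (it is incompatible with the reduction, since then $(A')^2/A$ dominates $\xi^{-2}$); as written, the claim that the general solution near $\xi=0$ is $-\frac{m-p}{m(N-2)}\ln\xi+K$ is not accurate.
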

\begin{proof}
The linearization of the system \eqref{systinf1} in a neighborhood of $Q_1=(0,0,0)$ has the matrix
$$
M(Q_1)=\left(
         \begin{array}{ccc}
           -(N-2) & -1 & 0 \\
           0 & 0 & 0 \\
           0 & 0 & 2 \\
         \end{array}
       \right).
$$
We set $g:=(N-2)y+z$ in order to put the system \eqref{systinf1} into the canonical form for the center manifold theorem, that is
\begin{equation*}
\left\{\begin{array}{ll}\dot{g}=-(N-2)g-(g-z)\left[\frac{m}{N-2}g-\frac{p}{N-2}z+\frac{1}{2}w\right],\\
\dot{z}=-\frac{m-p}{N-2}z(g-z),\\
\dot{w}=2w-\frac{m-1}{N-2}w(g-z),\end{array}\right.
\end{equation*}
and the center manifold has the form $(g(z),w(z))$. We easily infer by setting $g(z)=az^2+O(z^3)$, $w(z)=bz^2+O(z^3)$ and applying the approximation theorem \cite[Theorem 3, Section 2.5]{Carr} that $a=-p/(N-2)^2$ and $b=0$ and thus $g(z)=-pz^2/(N-2)^2+O(z^3)$, that is, $(N-2)y+z=-pz^2/(N-2)^2+O(z^3)$ and the flow on any (possible not unique) center manifold is given by the reduced equation
$$
\dot{z}=\frac{m-p}{N-2}z^2.
$$
We thus find that on the center manifolds we have an unstable flow, going out of $Q_1$. This is linked with the one-dimensional unstable manifold to generate a center-unstable manifold, whose orbits behave in a sufficiently small neighborhood of $Q_1$ as the solutions of the approximating system
\begin{equation}\label{interm3}
\left\{\begin{array}{ll}\dot{z}=\frac{m-p}{N-2}z^2,\\
\dot{w}=2w+\frac{m-1}{N-2}zw.\end{array}\right.
\end{equation}
The system \eqref{interm3} can be integrated to find that
\begin{equation}\label{trajec}
w\sim Cz^{(m-1)/(m-p)}e^{-2(N-2)/(m-p)z}, \qquad C\in(0,\infty),
\end{equation}
which is equivalent in terms of profiles to
$$
\frac{1}{m}\xi^2f(\xi)^{1-m}\sim C\left[\frac{f(\xi)^{p-m}}{m}\right]^{(m-1)/(m-p)}\exp\left[-\frac{2m(N-2)}{m-p}f(\xi)^{m-p}\right].
$$
We perform straightforward calculations in the right hand side of the previous equivalence to get that
\begin{equation}\label{interm4}
f(\xi)^{m-p}\sim K-\frac{m-p}{m(N-2)}\ln\,\xi,
\end{equation}
where $K\in\real$ is an arbitrary constant. Moreover, since $z=Z/X\to0$ at $Q_1$, we get that $f(\xi)^{p-m}\to0$, that is, $f(\xi)\to\infty$. This implies that necessarily the equivalence \eqref{interm4} is taken in the limit as $\xi\to0$ and the claimed local behavior \eqref{beh.Q1} follows.
\end{proof}
\begin{lemma}[Local analysis near $Q_5$]\label{lem.Q5}
The critical point $Q_5$ is an unstable node. The orbits going out of it contain profiles with the local behavior
\begin{equation}\label{beh.Q5}
f(\xi)\sim D\xi^{-(N-2)/m}, \qquad {\rm as} \ \xi\to0,
\end{equation}
with $D>0$ arbitrary.
\end{lemma}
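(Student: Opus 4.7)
The plan is to linearize system \eqref{systinf1} at $Q_5=(-(N-2)/m,0,0)$ and read off the eigenvalues directly. A short calculation shows that the Jacobian at $Q_5$ is upper triangular with diagonal entries
$$
N-2,\qquad \frac{(m-p)(N-2)}{m},\qquad \frac{2m+(m-1)(N-2)}{m},
$$
each strictly positive under the standing hypotheses $N\geq 3$, $m>1$, $p\in[1,m)$. Hence $Q_5$ is a hyperbolic unstable node, and a two-parameter family of orbits leaves it in the $(y,z,w)$ phase space.

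Next, to extract the profile behavior, I would combine the definitions in \eqref{PS.change} to obtain the identity
$$
y=\frac{Y}{X}=\frac{\xi f'(\xi)}{f(\xi)}.
$$
Since $y\to -(N-2)/m$ on any orbit leaving $Q_5$, the asymptotic relation $(\ln f)'(\xi)\sim -(N-2)/(m\xi)$ integrates to $f(\xi)\sim D\xi^{-(N-2)/m}$ for some $D>0$, which is \eqref{beh.Q5} up to identifying the direction in which $\xi$ degenerates.

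To settle that direction, I would plug the candidate $f(\xi)\sim D\xi^{-(N-2)/m}$ back into \eqref{PS.change}: one finds $X\sim mD^{m-1}\xi^{-[2m+(m-1)(N-2)]/m}$, so $w=1/X\to 0$, while $z=Z/X=f^{p-m}/m\to 0$ since $p<m$ and $f(\xi)\to\infty$. Both limits are consistent with approach to $Q_5$, and since the exponent appearing in $X$ is strictly negative, the divergence necessarily occurs as $\xi\to 0$, as claimed.

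I expect the only mild subtlety to be this last consistency check for the direction of $\xi$; everything else reduces to a hyperbolic linearization and a one-line integration, and is substantially cleaner than the center-manifold analysis carried out at $Q_1$ in Lemma \ref{lem.Q1}.
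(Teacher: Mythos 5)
Your proof is correct and takes essentially the same route as the paper: the upper-triangular Jacobian at $Q_5$ with positive diagonal entries $N-2$, $(m-p)(N-2)/m$ and $2+(m-1)(N-2)/m$ gives the unstable node, and integrating $y=\xi f'(\xi)/f(\xi)\to-(N-2)/m$ yields \eqref{beh.Q5}, with the direction $\xi\to0$ pinned down by the phase-space limits at $Q_5$. The paper rules out $\xi\to\xi_0\in(0,\infty)$ via $z\to0\Rightarrow f\to\infty$ and a contradiction upon integrating \eqref{interm5}, while you use $w\to0\Rightarrow X\to\infty$ together with the form of $X$; these are the same check, though you should phrase it so that the known limit $w\to0$ at $Q_5$ is the hypothesis and $\xi\to0$ the conclusion rather than the other way around.
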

\begin{proof}
The linearization of the system \eqref{systinf1} near $Q_5=(-(N-2)/m,0,0)$ has the matrix
$$
M(Q_5)=\left(
         \begin{array}{ccc}
           N-2 & -1 & \frac{N-2}{2m} \\
           0 & \frac{(m-p)(N-2)}{m} & 0 \\
           0 & 0 & 2+\frac{(m-1)(N-2)}{m} \\
         \end{array}
       \right),
$$
hence $Q_5$ is an unstable node, since we are in dimension $N\geq3$. The orbits going out of it satisfy $y\to-(N-2)/m$, that is
\begin{equation}\label{interm5}
\frac{\xi f'(\xi)}{f(\xi)}\to-\frac{N-2}{m},
\end{equation}
while also $z\to0$, that is, $Z/X=f(\xi)^{p-m}\to0$, which leads to $f(\xi)\to\infty$. A simple argument by contradiction shows that if these limits are taken as $\xi\to\xi_0\in(0,\infty)$, then an integration in \eqref{interm5} would contradict the fact that $f(\xi)\to\infty$ as $\xi\to\xi_0$. It thus follows that all the previous limits are taken as $\xi\to0$ and by integration we find the claimed behavior \eqref{beh.Q5}.
\end{proof}
The critical points $Q_2$ and $Q_3$ are analyzed through the change of variable
$$
x=\frac{X}{Y}, \qquad z=\frac{Z}{Y}, \qquad w=\frac{1}{Y},
$$
according to \cite[Theorem 5(b), Section 3.10]{Pe}, leading to the system
\begin{equation}\label{systinf2}
\left\{\begin{array}{ll}\pm\dot{x}=-mx-(N-2)x^2-\frac{1}{2}xw-x^2z,\\
\pm\dot{z}=-pz-\frac{1}{2}zw-(N-2)xz-xz^2,\\
\pm\dot{w}=-w-\frac{1}{2}w^2-Nxw-xzw,\end{array}\right.
\end{equation}
where the signs have to be chosen according to the direction of the flow. We notice that with respect to $Q_2$ one has to choose the minus sign in \eqref{systinf2}, while when analyzing the flow near $Q_3$ one has to choose the plus sign, since $\dot{Y}$ is negative near both $Q_2$ and $Q_3$ but the direction of the flow is reversed. We thus identify $Q_2$ with the origin of \eqref{systinf2} when taken the minus sign, which is an unstable node, and $Q_3$ with the origin of \eqref{systinf2} when taken the plus sign, which is a stable node.
\begin{lemma}[Local analysis near $Q_2$ and $Q_3$]\label{lem.Q23}
The orbits going out of $Q_2$ to the finite part of the phase space contain profiles $f(\xi)$ which change sign at some $\xi_0\in(0,\infty)$ in the sense that $f(\xi_0)=0$, $(f^m)'(\xi_0)>0$. The orbits entering the point $Q_3$ from the finite part of the phase space contain profiles $f(\xi)$ which change sign at some $\xi_0\in(0,\infty)$ in the sense that $f(\xi_0)=0$, $(f^m)'(\xi_0)<0$.
\end{lemma}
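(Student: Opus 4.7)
The plan is to handle $Q_2$ and $Q_3$ together, both being identified with the origin of the projected system \eqref{systinf2} via the chart $(x,z,w)=(X/Y,Z/Y,1/Y)$ with opposite sign conventions. A direct linearization of \eqref{systinf2} at the origin yields the eigenvalues $\pm m$, $\pm p$, $\pm 1$, all of the same sign because $m>p\geq 1$, so $Q_2$ is an unstable node and $Q_3$ a stable node, as already noted just before the statement. Hence a full family of orbits leaves $Q_2$ into the physical region and a symmetric family enters $Q_3$.

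The first step is to translate the asymptotic regime along such an orbit through the definitions \eqref{PS.change}. Near $Q_2$ one has $Y\to+\infty$ and $X,Z\to 0^+$. Assuming temporarily that $\xi\to\xi_0\in(0,\infty)$ along the orbit, the relation $X=m\xi^{-2}f^{m-1}\to 0$ forces $f\to 0$ at $\xi_0$, and then $(f^m)'=\xi f\cdot Y$ must tend to a finite limit $c>0$, giving locally $f^m(\xi)\sim c(\xi-\xi_0)$ for $\xi$ slightly greater than $\xi_0$. Thus $f^m$ crosses zero at $\xi_0$ with strictly positive slope, which is precisely the sign change claimed in the statement. The case of $Q_3$ is symmetric: $Y\to-\infty$ imposes $c<0$, so $(f^m)'(\xi_0)<0$ and the profile hits zero from the left with $f^m$ about to become negative for $\xi>\xi_0$.

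The main step, and the only subtle point, is to rule out $\xi_0=0$ and $\xi_0=\infty$, so that a finite positive $\xi_0$ is the only possibility. For $\xi_0=\infty$, integrating $(f^m)'\to c>0$ gives $f^m\to+\infty$, contradicting $f\to 0$. For $\xi_0=0$, the same integration gives $f^m(\xi)\sim c\xi$ as $\xi\to 0^+$, whence
\[
X(\xi)=m\,\xi^{-2}f(\xi)^{m-1}\sim m\,c^{(m-1)/m}\,\xi^{(m-1)/m-2}\longrightarrow+\infty,
\]
since $(m-1)/m-2<0$, contradicting $X\to 0$ at $Q_2$. Both endpoints are excluded, and the same two contradictions, with the sign of $c$ flipped, handle $Q_3$. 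I expect this exclusion step to be the main obstacle of the proof, since the local dynamics in the chart \eqref{systinf2} is dictated by the standard linearization, whereas pinning down $\xi_0$ requires returning to the definitions in \eqref{PS.change} rather than working entirely at the level of the dynamical system.
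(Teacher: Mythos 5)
Your overall strategy (node structure in the chart \eqref{systinf2}, then translation back through \eqref{PS.change} and location of the zero at a finite $\xi_0$) is the right one, and it is what the references the paper points to (\cite[Lemma 2.6]{IS21a}, \cite[Lemma 2.7]{IS19}) actually carry out; however, as written your argument has a genuine gap precisely at the step that constitutes the content of the lemma. Approaching $Q_2$ on the Poincar\'e sphere only gives $Y\to+\infty$ together with $X/Y\to0$ and $Z/Y\to0$; it does not by itself give $X\to0$ or $Z\to0$ (and in fact $Z\to\xi_0^{-2}\neq0$ along these orbits when $p=1$, which belongs to the admissible range). More seriously, once you have $f\to0$ and $Y\to+\infty$, the product $(f^m)'=\xi fY$ is an indeterminate form $0\cdot\infty$, and your assertion that it ``must tend to a finite limit $c>0$'' is exactly what has to be proved: local behaviors of the form $f(\xi)\sim C(\xi-\xi_0)^{a}$ with $1/m<a<1/(m-1)$ also satisfy $f\to0$, $X\to0$, $Y\to+\infty$ and $X/Y\to0$, $Z/Y\to0$, yet have $(f^m)'(\xi_0)=0$, so transversality of the crossing cannot be read off from the mere fact of converging to $Q_2$. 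The missing ingredient is the quantitative node asymptotics of \eqref{systinf2}: for an orbit leaving $Q_2$ (or entering $Q_3$) with $w\neq0$, i.e.\ coming from the finite part of the phase space, one has $x\sim Cw^{m}$ and $z\sim C'w^{p}$ with $C>0$, equivalently $X\sim C|Y|^{1-m}$, and it is this relation, rewritten via \eqref{PS.change}, that shows $m f^{m-1}f'$ stays bounded away from $0$ and $\infty$, whence $f^m(\xi)\sim c(\xi-\xi_0)$ with $c\neq0$. Since every term of the first equation in \eqref{systinf2} carries a factor $x$, and $\{x=0\}$, $\{w=0\}$ are invariant, this asymptotic relation is obtained by integrating $\dot x/x$ and $\dot w/w$ near the node; without it your proof assumes the conclusion.

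A second, related problem is that your exclusion of $\xi_0=0$ and $\xi_0=\infty$ is circular: it invokes $f\to0$ and $(f^m)'\to c>0$, which you only obtained under the standing assumption $\xi\to\xi_0\in(0,\infty)$. If $\xi\to\infty$, the relation $X=m\xi^{-2}f^{m-1}\to0$ no longer forces $f\to0$ (it is compatible with $f$ bounded, or even unbounded but of order $o(\xi^{2/(m-1)})$), so the contradiction you derive does not apply to that case. The endpoints must be ruled out by an argument that does not presuppose the local form, for instance from the node asymptotics above, or by the device used in Lemma \ref{lem.P1} of the paper (a limiting argument for $\xi X'(\xi)$ in the spirit of \cite[Lemma 2.9]{IL13}). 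With the node asymptotics supplied and the endpoint exclusion made independent of the assumed local behavior, your proof would essentially coincide with the argument of the cited lemmas.
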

The proof follows the same lines as in similar lemmas in previous works such as, for example, \cite[Lemma 2.6]{IS21a} for $1<p<m$ and \cite[Lemma 2.7]{IS19} for $p=1$ to which we refer the reader. We are thus left with the critical point $Q_4$, whose analysis employing \cite[Theorem 5(c), Section 3.10]{Pe} is not easy to perform, since we are left with a critical point having only zero eigenvalues after the corresponding change of variable. We thus work directly with Eq. \eqref{SSODE} in order to establish that no orbit either enters from or goes out into the finite part of the phase space associated to the system \eqref{PSsyst1}.
\begin{lemma}\label{lem.Q4}
There are no profiles $f(\xi)$ solutions to Eq. \eqref{SSODE} contained in orbits entering or going out of $Q_4$ from or to the finite part of the phase space.
\end{lemma}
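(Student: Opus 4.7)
The strategy is to bypass the completely degenerate linearization at $Q_4$ and work directly with Eq.~\eqref{SSODE}, showing that no non-trivial profile can produce an orbit connecting $Q_4$ with the finite part of the phase space associated to the system~\eqref{PSsyst1}.

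First I would translate the location $Q_4=(0,0,1,0)$ on the Poincar\'e hypersphere back into the behavior of the profile. Recalling \eqref{PS.change}, the conditions $Z\to\infty$, $X/Z\to0$ and $Y/Z\to0$ read, in leading order, as $\xi^{-2}f(\xi)^{p-1}\to\infty$ together with $X/Z=mf(\xi)^{m-p}\to0$, the latter forcing $f(\xi)\to0$ since $p<m$. A short case analysis on the possible limits of $\xi$ then excludes $\xi\to\xi_*\in(0,\infty)$ (which yields $Z\to0$ if $p>1$ or $Z$ finite if $p=1$) and $\xi\to\infty$ (since the blow-up of $Z=\xi^{-2}f^{p-1}$ would then force $f\to\infty$, contradicting $f\to0$). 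Thus every orbit connected with $Q_4$ must correspond to a profile with $f(\xi)\to0$ as $\xi\to0^{+}$.

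Once in this asymptotic regime I invoke Lemma~\ref{lem.max}: since $f>0$ on some right-neighborhood $(0,\delta)$ and $f(\xi)\to0$ at the left endpoint, the absence of positive local minima forces $f$ to be non-decreasing on $(0,\delta)$ and strictly increasing at some point $\xi_2\in(0,\delta)$. Multiplying \eqref{SSODE} by $\xi^{N-1}$ yields
$$
\bigl(\xi^{N-1}(f^m)'(\xi)\bigr)' \;=\; -\tfrac{1}{2}\,\xi^{N}f'(\xi) \;-\; \xi^{N-3}f(\xi)^{p} \;\leq\; 0 \qquad \text{for } \xi\in(0,\xi_2),
$$
so the map $\xi\mapsto\xi^{N-1}(f^m)'(\xi)$ is non-increasing on $(0,\xi_2)$. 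Setting $K:=\xi_2^{N-1}(f^m)'(\xi_2)>0$, we obtain the pointwise lower bound $(f^m)'(\xi)\geq K\xi^{-(N-1)}$ on $(0,\xi_2)$.

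The contradiction is then immediate: integrating this bound from $\xi$ to $\xi_2$, and using $N\geq3$,
$$
f^{m}(\xi_2)-f^{m}(\xi) \;=\; \int_\xi^{\xi_2}(f^m)'(s)\,ds \;\geq\; \frac{K}{N-2}\bigl(\xi^{-(N-2)}-\xi_2^{-(N-2)}\bigr) \;\longrightarrow\;+\infty
$$
as $\xi\to0^{+}$, which would force $f^{m}(\xi)\to-\infty$, contradicting $f\geq0$. The same argument applies equally to orbits entering or going out of $Q_4$, since it only uses the local behavior of the profile near $\xi=0$. I expect the main obstacle to be the first step: carefully ruling out every possible limit of $\xi$ other than $\xi\to0$, since only after having pinned down that simultaneously $\xi\to0$ and $f\to0$ can Lemma~\ref{lem.max} be invoked to get the monotonicity that drives the lower bound and the one-shot contradiction via the factor $N-2>0$.
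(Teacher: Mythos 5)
Your argument is correct, and it diverges from the paper precisely at the step that matters. The first reduction is identical: both you and the paper translate the $Q_4$ conditions into $Z\to\infty$, $X/Z=mf^{m-p}\to0$ (hence $f\to0$ since $p<m$), and use $p\geq1$ to exclude the limits $\xi\to\xi_*\in(0,\infty)$ and $\xi\to\infty$, leaving only $\xi\to0^+$. For that remaining case, however, the paper does not argue at all: it simply invokes \cite[Lemma 3.5, Step 3]{IMS21b} with $\sigma=-2$, whereas you give a short self-contained proof via the integrating factor $\xi^{N-1}$, the no-positive-minima Lemma \ref{lem.max} to get $f$ non-decreasing near $0$ with $(f^m)'(\xi_2)>0$ somewhere, the resulting monotonicity of $\xi\mapsto\xi^{N-1}(f^m)'(\xi)$, and the divergence of $\int_0 s^{-(N-1)}\,ds$ for $N\geq3$. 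What your route buys is independence from the external reference and in fact a stronger conclusion: you only use $f\to0$ as $\xi\to0^+$ together with positivity on $(0,\delta)$, not the full set of $Q_4$ conditions, so you show that \emph{no} admissible profile can vanish in the limit at $\xi=0$; the paper's route buys brevity and consistency with the $\sigma\in(-2,0)$ analysis it extends. Two small points to tighten: the deduction that $f$ is non-decreasing on a right-neighborhood of $0$ needs the (easy) observation that local extrema cannot accumulate at $0$ -- otherwise between a point with $f'<0$ and a later point with $f'>0$ there would sit a positive local minimum, contradicting Lemma \ref{lem.max} -- and the interval should be a possibly smaller $(0,\delta')$ rather than $(0,\delta)$ itself; and for $p=1$ the exclusion of $\xi\to\infty$ is because $Z=\xi^{-2}\to0$ directly, not because $f$ would be forced to blow up. Neither affects the validity of the proof.
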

\begin{proof}
Assume for contradiction that there exist such profiles. An orbit connecting to $Q_4$ satisfies the conditions $Z\to\infty$, $X/Z\to0$, $Y/Z\to0$, which translated in terms of profiles imply
\begin{equation}\label{interm7}
\xi^{-2}f(\xi)^{p-1}\to\infty, \ \ f(\xi)^{m-p}\to0, \ \ \xi f(\xi)^{m-p-1}f'(\xi)\to0,
\end{equation}
the limits in \eqref{interm7} being taken either as $\xi\to\infty$, or as $\xi\to\xi_0\in(0,\infty)$, or as $\xi\to0$. Notice first that the second limit in \eqref{interm7} and the fact that $m>p$ already give that $f(\xi)\to0$, which, together with the first limit in \eqref{interm7} and the fact that $p\geq1$, immediately rule out the possibility that the limits are taken as $\xi\to\infty$ or as $\xi\to\xi_0\in(0,\infty)$. It remains to show that the limits in \eqref{interm7} cannot hold true as $\xi\to0$. But this follows the proof in \cite[Lemma 3.5, Step 3]{IMS21b} by simply letting therein $\sigma=-2$ without any further change.
\end{proof}
We are now ready to move towards the global analysis of the system \eqref{PSsyst1} and prove Theorem \ref{th.uniqSS}.

\section{Proof of Theorem \ref{th.uniqSS}: existence part}\label{sec.exist}

This section is dedicated to the proof of the existence of a self-similar profile with the local behavior given in the statement of Theorem \ref{th.uniqSS}. The proof borrows ideas from \cite[Section 4]{IMS21b} (thus we will skip a few details given in the quoted reference) and is based on a shooting technique on the two-dimensional center-unstable manifold near $Q_1$ (according to Lemma \ref{lem.Q1}) performed in the system \eqref{systinf1}. Notice that the center-unstable manifold is tangent to the vector space spanned by the eigenvector $e_3=(0,0,1)$ corresponding to the eigenvalue $\lambda_3=2$ of the matrix $M(Q_1)$ and the direction $e_2=(-1,N-2,0)$ of any center manifold (which might not be unique), which according to Lemma \ref{lem.Q1} are tangent to the direction $(N-2)y+z=0$. These vectors belong to the invariant planes $\{z=0\}$, respectively $\{w=0\}$ and the trajectories go out tangent to the one-parameter family of curves given by \eqref{trajec} for $C\in(0,\infty)$. We thus have to explore the global behavior of the limiting orbits belonging to the invariant planes.
\begin{proposition}\label{prop.shootw0}
There exists $C_*>0$ such that the orbits going out of $Q_1$ tangent to the curves \eqref{trajec} with $C\in(0,C_*)$ enter the critical point $Q_3$.
\end{proposition}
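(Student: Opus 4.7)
The strategy is a continuous--dependence (shooting) argument built on understanding the limit $C\to 0^+$ of the family of orbits parametrized by \eqref{trajec}. From the form of \eqref{trajec}, this limit corresponds to trajectories of the projected system \eqref{systinf1} lying in the invariant plane $\{w=0\}$, which translated back to the Poincar\'e compactification of the original system \eqref{PSsyst1} is precisely the arc of the equator where $X=\infty$. The plan is first to trace the $C=0$ limit orbit inside $\{w=0\}$ and show that it connects $Q_1$ to $Q_3$ along the equator, and then to invoke continuity of the flow on the compact Poincar\'e $3$-ball together with the attractor property of $Q_3$ established in Lemma \ref{lem.Q23} to transfer this behavior to all sufficiently small $C>0$.

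\textbf{Step 1: global analysis of the limit orbit.} On $\{w=0\}$ the system \eqref{systinf1} reduces to the planar autonomous system
\begin{align*}
\dot y &= -(N-2)y - z - my^2, \\
\dot z &= -(m-p)yz,
\end{align*}
whose only finite critical points are $Q_1$ and $Q_5=(-(N-2)/m,0)$, both on $\{z=0\}$. The center--manifold analysis of Lemma \ref{lem.Q1} shows that the $C=0$ limit orbit leaves $Q_1$ tangent to $(-1,N-2)$, hence enters the open quadrant $\{y<0,\,z>0\}$. The next step is to verify that this quadrant is positively invariant: the line $\{y=0\}$ is repelling for $z>0$ since $\dot y|_{y=0}=-z<0$, while $\{z=0\}$ is itself invariant. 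Because $\dot z=(m-p)|y|z>0$ inside the quadrant, the orbit cannot accumulate on $Q_5$, so it must escape to infinity. A dominant--balance argument then shows that eventually $z\ll y^2$ and consequently $\dot y\approx -my^2$, which produces a finite-$\eta$ blow-up $y\to-\infty$. Integrating the ratio equation for $dz/ds$ with $s=-y$ yields $z\sim C_1 s^{(m-p)/m}$ with $(m-p)/m<1$, so $z/|y|\to 0$. Translated back to the coordinates of the Poincar\'e hypersphere, the trajectory tends to $(0,-1,0,0)=Q_3$.

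\textbf{Step 2 and main obstacle.} By Lemma \ref{lem.Q23}, $Q_3$ is a stable node of the projected flow \eqref{systinf2} with the plus sign, so it admits an open basin of attraction $U$ on the compactified phase space. Since the $C=0$ limit orbit enters $U$ in finite $\eta$, standard continuous dependence of the flow on initial data ensures that orbits corresponding to $C>0$ sufficiently small stay inside a tubular neighborhood of the limit orbit over the required finite time interval and therefore also enter $U$, after which they converge to $Q_3$. Setting $C_*$ equal to the supremum of the parameters for which this holds concludes the argument. The main technical obstacle will be carrying out this continuous--dependence argument rigorously across the chart transitions inherent in the Poincar\'e compactification: the limit orbit begins in the $X$--projection chart \eqref{systinf1} near $Q_1$ and terminates in the $Y$--projection chart \eqref{systinf2} near $Q_3$, so one must overlap the two descriptions along the equator arc joining $Q_1$ to $Q_3$ and check that the concatenated flow depends continuously on $C$. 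Excluding the competing possibility that a small perturbation of the limit orbit is deflected towards $Q_2$, $Q_4$, $P_0$ or $P_1$ will follow from the positive invariance of $\{y<0,\,z>0\}$ together with its robustness under small perturbations on finite time intervals.
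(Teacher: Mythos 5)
Your overall strategy coincides with the paper's: study the $C=0$ limit orbit in the invariant plane $\{w=0\}$ of \eqref{systinf1}, show it joins $Q_1$ to $Q_3$, and then conclude for small $C>0$ by continuity together with the stable-node character of $Q_3$ (the paper does exactly this, citing standard continuity arguments). The chart-transition issue you single out as the ``main obstacle'' is not where the real work lies.

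The genuine gap is in Step 1, at the passage from ``the orbit cannot accumulate on $Q_5$, so it must escape to infinity'' to ``eventually $z\ll y^2$''. Positive invariance of $\{y<0,\,z>0\}$ and $\dot z>0$ only give that $z$ is monotone; they do not rule out the competing scenario in which $y$ stays bounded while $z\to\infty$, and in that scenario one has $Y/Z\to$ finite limit and $X/Z\to0$, i.e.\ the orbit would approach $Q_4$, not $Q_3$, so your dominant-balance ansatz $\dot y\approx -my^2$ is assuming precisely what has to be proved. Nor have you ruled out oscillation of $y$ (monotonicity of $z$ alone does not force $y$ to have a limit). The paper closes both holes before doing any asymptotics: it shows the orbit enters and remains in the region $\mathcal{R}=\{y<0,\ z>-(N-2)y-my^2\}$ above the isocline \eqref{iso1} (the flow on the isocline points into $\mathcal{R}$), so that $\dot y<0$ and $\dot z>0$ simultaneously and both coordinates have limits; it then excludes, by explicit $dy/dz$ computations, the cases $y\to y_0\in(-\infty,0)$ with $z\to\infty$ (no vertical asymptote is possible since $dy/dz$ stays bounded) and $y\to-\infty$ with $z$ bounded. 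Only after $y\to-\infty$, $z\to\infty$ is established does the balance $y^2\sim Kz^{2m/(m-p)}$, equivalent to your $z\sim C_1|y|^{(m-p)/m}$, become legitimate and identify the limit point as $Q_3$. Incorporating the isocline/invariant-region argument and the exclusion of the $Q_4$-type alternative is what your plan is missing; the rest (continuity in $C$ near the $C=0$ orbit, using that $Q_3$ is a stable node from Lemma \ref{lem.Q23}) is as in the paper.
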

\begin{proof}
We study the limit orbit with $w=0$ by working on the reduced system obtained from \eqref{systinf1} in the invariant plane $\{w=0\}$, that is
\begin{equation}\label{systw=0}
\left\{\begin{array}{ll}\dot{y}=-(N-2)y-my^2-z,\\ \dot{z}=-(m-p)yz.\end{array}\right.
\end{equation}
The orbit going out of $Q_1$ goes out tangent to the direction $e_2=(-1,N-2)$, thus it enters the half-plane $\{y<0\}$. Since the flow of the system \eqref{systw=0} on the line $\{y=0\}$ is negative, it follows that the orbit stays forever in the negative half-plane. Consider now the isocline
\begin{equation}\label{iso1}
-(N-2)y-my^2-z=0,
\end{equation}
and notice that the orbit goes out tangent to it. But since in the half-plane $\{y<0\}$ we have always $\dot{z}>0$ and the orbit starts decreasingly in $y$ and increasingly in $z$ near $Q_1=(0,0)$, we infer that
$$
\frac{dy}{dz}=\frac{(N-2)y+z+my^2}{(m-p)yz}<0
$$
in a small neighborhood of $Q_1$, hence the orbit enters the region $\mathcal{R}:=\{y<0, z>-(N-2)y-my^2\}$. Furthermore, since the direction of the flow of the system \eqref{systw=0} on the isocline \eqref{iso1} is given by the sign of $(m-p)yz<0$, it follows that our orbit going out of $Q_1$ will remain forever in the region $\mathcal{R}$. Thus, along it we always have $\dot{y}<0$, $\dot{z}>0$, which means that $y$ and $z$ have a limit along the trajectory. We prove that $z\to\infty$ and $y\to-\infty$ by removing all the other possibilities. Indeed, if this was not the case, our orbit would be in one of the following three situations:

$\bullet$ $y\to y_0\in(-\infty,0)$, $z\to z_0\in(0,\infty)$. Then $(y_0,z_0)$ must be a finite critical point of the system \eqref{systw=0} and there is no such point.

$\bullet$ $y\to y_0\in(-\infty,0)$ and $z\to\infty$. It follows that
$$
\frac{dy}{dz}=\frac{(N-2)y+my^2+z}{(m-p)yz}\to\frac{1}{(m-p)y_0}<0
$$
and this contradicts the existence of a vertical asymptote of the trajectory at $y=y_0$ since in such case we would have $dy/dz\to-\infty$ at least on some subsequence $y_k\to y_0$.

$\bullet$ $y\to-\infty$ and $z\to z_0\in(0,\infty)$. We have for $y$ very large in absolute value and $z$ close to $z_0$ that
$$
\frac{dy}{dz}\sim\frac{my}{(m-p)z_0}, \qquad {\rm whence} \qquad y\sim Ke^{mz/(m-p)z_0}\to Ke^{(m-p)/m}
$$
and again we reach a contradiction.

We thus conclude that $y\to-\infty$ and $z\to\infty$ along this trajectory, thus for $y$ and $z$ very large in absolute value
$$
\frac{dy}{dz}\sim\frac{m}{m-p}\frac{y}{z}+\frac{1}{(m-p)y},
$$
which leads to the following first approximation of the trajectory:
\begin{equation}\label{interm15}
y^2\sim Kz^{2m/(m-p)}-\frac{2z}{m+p}\sim Kz^{2m/(m-p)}, \qquad K>0.
\end{equation}
where for the latter equivalence we took into account that $2m/(m-p)>2$. Thus $y\sim-\overline{K}z^{m/(m-p)}$ for $y$, $z$ sufficiently large in absolute value and some $\overline{K}>0$, which gives $y/z\sim-\overline{K}z^{p/(m-p)}\to-\infty$, or in terms of the initial variables $Y$ and $Z$, we also get $Y/Z\to-\infty$. We also notice that $y\to-\infty$ translates into $Y/X\to-\infty$ in the variables given by \eqref{PS.change}, thus the orbit has to enter a critical point characterized by
$$
\frac{Y}{X}\to-\infty, \qquad \frac{Y}{Z}\to-\infty
$$
in a neighborhood of it, and this point is $Q_3$ according to the classification given in Section \ref{sec.infty}. Moreover, since $Q_3$ is a stable node according to Lemma \ref{lem.Q23} and the orbit going out of $Q_1$ included in the plane $\{w=0\}$ corresponds to the parameter $C=0$ in \eqref{trajec}, we reach the conclusion by standard continuity arguments similar to the ones employed in, for example, \cite[Proposition 3.4]{IS21a} or \cite[Proposition 3.3]{IS21b}.
\end{proof}
\begin{proposition}\label{prop.shootz0}
There exists $C^*>0$ such that the orbits going out of $Q_1$ tangent to the curves \eqref{trajec} with $C\in(C^*,\infty)$ enter the critical point $P_0$.
\end{proposition}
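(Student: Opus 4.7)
The plan is to identify the limit orbit as $C\to\infty$ in the family \eqref{trajec} and to exploit the local attracting character of $P_0$ in the half-space $\{X>0\}$ granted by Lemma~\ref{lem.P0}. Since the center component of the center--unstable manifold of $Q_1$ lies in the invariant plane $\{w=0\}$ while the unstable direction is the eigenvector $e_3=(0,0,1)$, letting $C\to\infty$ in \eqref{trajec} selects the pure unstable orbit of \eqref{systinf1}, which is the one contained in the invariant line $\{y=0,\,z=0\}$. Translated back through \eqref{change.inftyX} into the variables of \eqref{PSsyst1}, this limit orbit coincides with the positive $X$-semiaxis $\{(X,0,0):X>0\}$; on it the dynamics reduces to $\dot X=-2X^2$, so the orbit connects $Q_1$ (reached as $X\to\infty$) to $P_0$ (reached as $X\to 0^+$) without meeting any other critical point of the system.

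The second step uses Lemma~\ref{lem.P0} together with the reduced dynamics \eqref{interm2} to produce an open neighbourhood $\mathcal{U}$ of $P_0$, contained in $\{X>0\}\cup\{P_0\}$, which is forward-invariant and on which every orbit converges to $P_0$. The limit orbit just identified enters such a $\mathcal{U}$ at some finite value $\eta_0$ of the independent variable, and I would fix an open ball $\mathcal{B}$ around the corresponding point of the limit orbit which is entirely contained in $\mathcal{U}$.

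The proof is then closed by a shooting-and-continuity argument parallel to the one used in Proposition~\ref{prop.shootw0}: the orbits of the two-dimensional center--unstable manifold of $Q_1$ are parametrized continuously by $C\in(0,\infty]$ via \eqref{trajec}, with the value $C=\infty$ corresponding exactly to the limit orbit just identified. After transferring this parametrization onto a cross-section transverse to the flow of \eqref{PSsyst1} in a small neighbourhood of $Q_1$ and invoking continuous dependence on initial data on the compact interval of the independent variable ending at $\eta_0$, one obtains $C^*>0$ such that, for every $C\in(C^*,\infty)$, the corresponding orbit meets $\mathcal{B}$ and is thus driven into $P_0$ by Lemma~\ref{lem.P0}. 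The hardest step is precisely this last piece of continuous dependence, because the parametrization \eqref{trajec} is anchored asymptotically at the critical point $Q_1$ rather than at a regular point; once the orbits are transferred onto a transverse cross-section (a routine but slightly delicate manipulation), the remainder of the argument is parallel to those in \cite[Proposition~3.4]{IS21a} and in Proposition~\ref{prop.shootw0}.
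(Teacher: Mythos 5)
Your proposal is correct and follows essentially the same route as the paper: identify the limit orbit as $C\to\infty$ with the one contained in $\{y=0,\,z=0\}$ (the positive $X$-semiaxis, where $\dot X=-2X^2$ drives it into $P_0$), and then conclude for large $C$ by continuity together with the attracting character of $P_0$ for orbits in $\{X>0\}$ from Lemma~\ref{lem.P0}. The only difference is cosmetic: where you transfer the family to a transverse cross-section to handle the fact that the parametrization is anchored at $Q_1$, the paper simply inverts the parameter, rewriting \eqref{trajec} as $g(z)=C_1w$ with $C_1=1/C$, so that the limit orbit corresponds to $C_1=0$ and the standard continuity argument applies in a right-neighbourhood of zero.
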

\begin{proof}
We now analyze the limit orbit going out of $Q_1$ and contained in the invariant plane $\{z=0\}$, which is the unique orbit going out of the critical (saddle) point $Q_1=(0,0)$ of the reduced system obtained from \eqref{systinf1} by letting $z=0$, that is
\begin{equation}\label{systz=0}
\left\{\begin{array}{ll}\dot{y}=-(N-2)y-my^2-\frac{1}{2}yw,\\ \dot{w}=w(2-(m-1)y).\end{array}\right.
\end{equation}
This orbit goes out of $Q_1$ tangent to $e_3=(0,1)$ and it has to stay on the invariant line $\{y=0\}$. In fact, one can easily see that the two orbits (going out and entering $Q_1=(0,0)$ in the system \eqref{systz=0}) lie on the two axis of \eqref{systz=0} since both are invariant sets. Thus this orbit will coincide with the axis $\{y=0\}$ for any $w>0$. In terms of the variables $X=1/w$, $Y$, $Z$, this shows that the limit orbit has to belong to the $X$ axis and thus trivially enters $P_0$. In order to end the proof, since $+\infty$ is not a number in order to apply a continuity argument in a neighborhood of it, we have to reverse \eqref{trajec} and write it as
$$
g(z)=\frac{1}{C}w=C_1w, \qquad g(z)=z^{(m-1)/(m-p)}e^{-2(N-2)/(m-p)z}.
$$
Noticing that $\lim\limits_{z\to0}g(z)=0$, we observe that the orbit included in the invariant plane $\{z=0\}$ lies at the limit $C_1\to0$, $C_1>0$ of the orbits in \eqref{trajec}, which corresponds to $C\to+\infty$. Since, according to Lemma \ref{lem.P0}, $P_0$ is a stable point for orbits coming from the half-space $\{X>0\}$ of the phase space associated to the system \eqref{PSsyst1}, the conclusion follows once more from a continuity argument applied for $C_1$ in a right-neighborhood of zero.
\end{proof}
Before going to the proof of the existence, we need one more preparatory result proving that at least two coordinates are monotone along all the orbits going out of $Q_1$. This is essential in order to prevent such orbits from oscillating infinitely many times.
\begin{lemma}\label{lem.noncycle}
The coordinates $X$ and $Z$ are decreasing along the orbits going out of $Q_1$.
\end{lemma}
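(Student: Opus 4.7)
The plan is to reduce the entire claim to a single monotonicity fact: $Y\le 0$ along every orbit leaving $Q_1$. Once this is granted, the equations of \eqref{PSsyst1} give immediately, using $X>0$, $m>1$ and $p\ge 1$,
\begin{equation*}
\dot X = X\bigl[(m-1)Y-2X\bigr] \le -2X^2 <0,\qquad \dot Z = Z\bigl[(p-1)Y-2X\bigr]\le -2XZ\le 0,
\end{equation*}
so $X$ is strictly decreasing and $Z$ is non-increasing (strictly decreasing whenever $Z>0$), as claimed. Hence the whole work is to justify the sign condition on $Y$.

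For the departure, I would invoke Lemma \ref{lem.Q1}: in the $(y,z,w)$ variables of \eqref{systinf1}, the center-unstable manifold of $Q_1$ is tangent to the plane $(N-2)y+z=0$, so it admits for small $z$ an expansion $(N-2)y+z=O(z^2)$. Since on our orbits $z=Z/X\ge 0$ and $X>0$, this forces $y\le 0$ in a neighborhood of $Q_1$, and therefore $Y\le 0$ at the outset on every orbit going out of $Q_1$ (with $Y\equiv 0$ on the limit orbit contained in $\{z=0\}$, by invariance of that plane).

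For the propagation, I would use the observation recorded right before the list of finite critical points in Section \ref{sec.local}: on the plane $\{Y=0\}$ the second equation of \eqref{PSsyst1} reduces to $\dot Y=-XZ$, which is strictly negative whenever $X,Z>0$. Because $\{X=0\}$ and $\{Z=0\}$ are invariant, every orbit leaving $Q_1$ with parameter $C\in(0,\infty)$ in \eqref{trajec} keeps $X(\eta),Z(\eta)>0$ for every $\eta$; by continuity, $Y$ cannot pass from negative to positive values, for at the hypothetical crossing instant we would need $\dot Y\ge 0$, contradicting $\dot Y=-XZ<0$. Therefore $Y\le 0$ along the whole orbit, and the first display concludes the proof.

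The only subtlety I anticipate is the treatment of the limit orbit contained in $\{z=0\}$: there $Y=Z\equiv 0$ throughout, so the argument degenerates, but the first equation simply reads $\dot X=-2X^2<0$ and $\dot Z\equiv 0$, which is still compatible with the statement. The main conceptual step is thus neither the departure nor the conclusion but the use of the sign of $\dot Y$ on $\{Y=0\}$ combined with the invariance of the coordinate planes; once this is written down, the rest is algebraic manipulation.
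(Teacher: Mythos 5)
Your argument is correct and is essentially the paper's own proof: the paper likewise observes that the flow of \eqref{PSsyst1} across the plane $\{Y=0\}$ has the sign of $-XZ\le 0$, so the orbits leaving $Q_1$ into the half-space $\{Y<0\}$ stay there forever, and then reads off $\dot X<0$ and $\dot Z<0$ directly from the equations using $X>0$, $Z>0$, $m>1$, $p\ge 1$. Your additional remarks (the departure sign of $Y$ via the tangency in Lemma \ref{lem.Q1} and the degenerate limit orbit in $\{z=0\}$, where $Z\equiv 0$) only make explicit what the paper leaves implicit.
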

\begin{proof}
The flow of the system \eqref{PSsyst1} over the plane $\{Y=0\}$ is given by the sign of the expression $-XZ\leq0$, thus no orbit can cross the plane $\{Y=0\}$ from left to right. Since the orbits going out of $Q_1$ enter the half-space $\{Y<0\}$, they will stay forever in this negative half-space, hence $\dot{X}<0$ and $\dot{Z}<0$ on these orbits, as claimed.
\end{proof}
We can now complete the proof of the existence of the specific self-similar profile.
\begin{proof}[Proof of Theorem \ref{th.uniqSS}: existence]
We associate (by tangency) the manifold going out of $Q_1$ with the one-parameter family of curves given by \eqref{trajec} as explained before. We then consider the following three sets
\begin{equation*}
\begin{split}
&\mathcal{A}=\{C\in(0,\infty): {\rm the \ orbit \ with \ parameter} \ C \ {\rm in \ \eqref{trajec} \ enters} \ P_0\},\\
&\mathcal{C}=\{C\in(0,\infty): {\rm the \ orbit \ with \ parameter} \ C \ {\rm in \ \eqref{trajec} \ enters} \ Q_3\},\\
&\mathcal{B}=\{C\in(0,\infty): {\rm the \ orbit \ with \ parameter} \ C \ {\rm in \ \eqref{trajec} \ does \ neither \ enter} \ P_0 \ {\rm nor} \ Q_3\}.
\end{split}
\end{equation*}
We then deduce from Propositions \ref{prop.shootw0} and \ref{prop.shootz0} and the fact that $Q_3$ is a stable node, while $P_0$ is an attractor for the orbits coming from the region $\{X>0\}$ of the phase space, that both sets $\mathcal{A}$ and $\mathcal{C}$ are open and non-empty. It then follows that $\mathcal{B}$ is closed and non-empty. Let now $C\in\mathcal{B}$. The orbit going out of $Q_1$ tangent to the curve \eqref{trajec} corresponding to this value of $C$ does not enter $P_0$, nor $Q_3$ and we infer from the monotonicity of its $X$ and $Z$ components established in Lemma \ref{lem.noncycle} that it has to enter a critical point. This can be easily seen by showing that the coordinate $Y$ also has a limit along the orbit, by considering possible sequences of maxima and minima of the $Y$ component and prove that they converge to the same value. A detailed proof of this rather standard argument follows the one of \cite[Proposition 4.10]{ILS22}. It thus have to enter the \emph{unique} remaining critical point which has a stable manifold, that is $P_1$. The profiles contained in such orbits corresponding to $C\in\mathcal{B}$ fulfill the statement of Theorem \ref{th.uniqSS}.
\end{proof}
We thus conclude that all the profiles going out of $Q_1$ are decreasing on their support and are either compactly supported or having a horizontal asymptote as $\xi\to\infty$. This fact will be used in the proof of the uniqueness of the profile with interface, performed in the next section.

\section{Proof of Theorem \ref{th.uniqSS}: uniqueness}\label{sec.uniq}

In this section we complete the proof of Theorem \ref{th.uniqSS} by establishing that the set $\mathcal{B}$ introduced in the proof of the existence part is in fact a singleton. This will be done at the level of the profiles, by showing first that the profiles contained in orbits going out of $Q_1$ remain strictly ordered with respect to the free parameter $K\in\real$ in \eqref{beh.Q1} during all their support. To this end, we use a \emph{rescaling and sliding technique} stemming from (up to our knowledge) Friedman and Kamin \cite{FK80} but used in many works such as \cite{IV10, YeYin, IMS21b} among others. The first result is the following monotonicity lemma with respect to $K$. To fix the notation, let $f(\cdot;K)$ be the profile contained in the orbit going out of $Q_1$ with local behavior given by \eqref{beh.Q1} for given $K\in\real$.
\begin{lemma}\label{lem.monot}
Let $-\infty<K_1<K_2<\infty$. Then we have $f(\xi;K_1)<f(\xi;K_2)$ for any $\xi>0$ such that $f(\xi;K_1)>0$.
\end{lemma}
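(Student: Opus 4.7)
The plan is an argument by contradiction, combining the asymptotic ordering from \eqref{beh.Q1} with a first-contact analysis at the level of the ODE. First, I observe that the profiles are strictly ordered near the origin: \eqref{beh.Q1} gives $f(\xi;K)^{m-p} \sim K - \frac{m-p}{m(N-2)}\ln\xi$ as $\xi\to 0$, and since $s \mapsto s^{1/(m-p)}$ is strictly increasing on $\real_+$, the inequality $K_1 < K_2$ yields $f(\xi;K_1) < f(\xi;K_2)$ for all $\xi$ in some right-neighborhood of the origin. I then set $f_i := f(\cdot;K_i)$ and work on the common positivity set of $f_1$.

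Suppose for contradiction that the ordering fails somewhere in this set, and let $\xi_* > 0$ be the smallest point of contact, so that $f_1(\xi_*) = f_2(\xi_*) =: F > 0$ and $(f_2 - f_1)(\xi) > 0$ on $(0,\xi_*)$. Then necessarily $f_2'(\xi_*) \leq f_1'(\xi_*)$. If equality held, then since $F > 0$ Eq. \eqref{SSODE} rewrites, after solving for $f''$, as a first-order system in $(f,f')$ with locally Lipschitz right-hand side near $\xi_*$; Cauchy uniqueness then forces $f_1 \equiv f_2$ in a neighborhood of $\xi_*$ and, by standard continuation on the common positivity set, globally, contradicting $K_1 \neq K_2$. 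Hence $f_2'(\xi_*) < f_1'(\xi_*)$ strictly, and both derivatives are negative by Lemma \ref{lem.max}. To upgrade this strict inequality into a contradiction, I employ the sliding technique of Friedman--Kamin \cite{FK80}, adapted as in \cite{IMS21b}, using $K$ itself as the sliding parameter: consider the continuous family $\{f(\cdot;K_1+\tau)\}_{\tau \in [0,K_2-K_1]}$ and set
$$
\tau_* = \sup\{\tau\in[0,K_2-K_1] : f(\cdot;K_1+\tau) \leq f(\cdot;K_2) \ \text{on the common positivity set}\}.
$$
Continuous dependence of the orbits of \eqref{PSsyst1} on the tangential parameter $C$ in \eqref{trajec} (which, by the calculation preceding this section, corresponds monotonically to $K$), combined with uniform monotonicity in $\tau$ near both endpoints via \eqref{beh.Q1} and \eqref{beh.P1}, shows that $\tau_*$ is attained at an interior first-contact point $\widetilde{\xi}_* > 0$. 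Applying the first-contact analysis above to the pair $f(\cdot;K_1+\tau_*)$ and $f(\cdot;K_2)$ then forces $K_1 + \tau_* = K_2$, hence $\tau_* = K_2-K_1$ and strict ordering holds throughout.

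The main obstacle is the execution of the sliding step. Unlike in the classical Friedman--Kamin setting for the porous medium equation, Eq. \eqref{SSODE} admits no non-trivial invariant rescaling in $\xi$ (as follows from the change of variables \eqref{PS.change} in Section \ref{sec.local}), so the sliding cannot be performed by translating or scaling the argument and must instead proceed exclusively through the parameter $K$ in \eqref{beh.Q1}, equivalently through the parameter $C$ in the trajectory family \eqref{trajec} near $Q_1$. This forces a careful use of the center-unstable manifold structure at $Q_1$ established in Lemma \ref{lem.Q1}, together with continuous dependence of the orbits of \eqref{PSsyst1} on that parameter and the explicit asymptotic expansions at both endpoints \eqref{beh.Q1} and \eqref{beh.P1}, in order to rule out that the first-contact point degenerates to $\xi = 0$ or to the interface $\xi = \xi_0$.
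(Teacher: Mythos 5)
Your opening steps (strict ordering near $\xi=0$ from \eqref{beh.Q1}, first contact point $\xi_*$, and the tangency dichotomy resolved by Cauchy uniqueness for \eqref{SSODE} where $f>0$) are fine, but the step that is supposed to carry the whole weight --- the sliding in the parameter $K$ --- does not work as written, and it is circular if repaired in the obvious way. With your definition, $\tau_*=\sup\{\tau\in[0,K_2-K_1]: f(\cdot;K_1+\tau)\le f(\cdot;K_2)\}$ is trivially equal to $K_2-K_1$, because $\tau=K_2-K_1$ gives equality of the two profiles; concluding ``hence strict ordering holds throughout'' for the original pair $(K_1,K_2)$ is a non sequitur, since nothing is learned about $\tau=0$. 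If instead you try to slide downwards from $\tau=K_2-K_1$, you need to know that ordering holds for all $\tau$ close to $K_2-K_1$, which is exactly a statement of the type being proven; and since all members of the family $f(\cdot;K_1+\tau)$ are exact solutions of the \emph{same} ODE \eqref{SSODE}, a first contact between two of them only yields the tangency/transversal dichotomy again --- a transversal crossing is not a contradiction --- so the sliding in $K$ gains nothing over the direct contact analysis. In addition, your control of the other endpoint ``via \eqref{beh.P1}'' presupposes both continuity of the interface location $\xi_0$ in $K$ (nowhere established) and that the profiles in question have an interface at all, whereas the lemma is applied to all profiles emanating from $Q_1$, some of which end at $P_0$ with a positive horizontal asymptote.

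The idea you dismiss is precisely the one the paper uses: the absence of an invariant rescaling is the point. Setting $g_1=f(\cdot;K_1)^m$ and $g_{\lambda}(\xi)=\lambda^{-2m/(m-1)}g_1(\lambda\xi)$ as in \eqref{resc}, the rescaled function solves \eqref{interm10}, i.e.\ the same equation but with the weight multiplied by $\lambda^{2(p-m)/(m-1)}>1$ for $\lambda\in(0,1)$; thus $g_{\lambda}$ is a strict supersolution of the equation solved by $g_2=f(\cdot;K_2)^m$. This buys two things you cannot get from sliding in $K$: (i) a non-empty starting regime, since $g_{\lambda}\to+\infty$ uniformly on $[0,\xi_*]$ as $\lambda\to0$ (and, via \eqref{beh.Q1}, $g_{\lambda}>g_2$ near the origin once $K_1-\tfrac{m-p}{m(N-2)}\ln\lambda>K_2$), so the optimal parameter $\lambda_0$ in \eqref{opt} is well defined in $(0,1)$; and (ii) a genuine contradiction at the optimal contact point: at an interior minimum of $g_{\lambda_0}-g_2$ the conditions \eqref{interm12} combined with the two equations give $0\le(\xi^*)^{-2}\bigl[1-\lambda_0^{2(p-m)/(m-1)}\bigr]g_{\lambda_0}(\xi^*)^{p/m}<0$, while a contact at $\xi^*=\xi_*$ is excluded directly by the monotonicity of $g_1$ (Lemma \ref{lem.max}). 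Without converting one of the two compared functions into a strict super- or subsolution (or finding some other quantitative mechanism at the contact point), your argument cannot close in the transversal-crossing case, so as it stands there is a genuine gap.
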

\begin{proof}
Let $\xi_1\in(0,\infty]$ be such that $f(\xi;K_1)>0$ for any $\xi\in(0,\xi_1)$. Since $K_1<K_2$, we infer from \eqref{beh.Q1} that $f(\xi;K_1)<f(\xi;K_2)$ for $\xi$ in a right neighborhood of the origin. We can thus introduce
\begin{equation}\label{interm8}
\xi_{*}=\inf\{\xi\in(0,\xi_1):f(\xi;K_1)=f(\xi;K_2)\}.
\end{equation}
It follows that $f(\xi;K_1)<f(\xi;K_2)$ for any $\xi\in(0,\xi_{*})$. Assume for contradiction that $\xi_{*}<\xi_1$, that is, the two profiles cross each other before the edge of the support of the first one. We next argue as in \cite{YeYin, IMS21b} by letting first $g_1(\xi)=f(\xi;K_1)^m$, $g_2(\xi)=f(\xi;K_2)^m$, which solve the differential equation
\begin{equation}\label{interm9}
g''(\xi)+\frac{N-1}{\xi}g'(\xi)+\frac{1}{2}\xi(g^{1/m})'(\xi)+\xi^{-2}g(\xi)^{p/m}=0,
\end{equation}
and then introduce for any $\lambda\in[0,1]$ the following rescaling (which is standard for the porous medium equation):
\begin{equation}\label{resc}
f_{\lambda}(\xi)=\lambda^{-2/(m-1)}f(\lambda\xi;K_1), \qquad g_{\lambda}(\xi)=\lambda^{-2m/(m-1)}g_1(\lambda\xi).
\end{equation}
It follows from \eqref{interm9} and straightforward calculations that $g_{\lambda}$ is a solution to the differential equation
\begin{equation}\label{interm10}
g_{\lambda}''(\xi)+\frac{N-1}{\xi}g_{\lambda}'(\xi)+\frac{1}{2}\xi(g_{\lambda}^{1/m})'(\xi)+\lambda^{2(p-m)/(m-1)}\xi^{-2}g_{\lambda}(\xi)^{p/m}=0.
\end{equation}
We furthermore observe that, if $0<\lambda<\lambda'<1$, due to the monotone decreasing character of $g_1$ over $(0,\xi_1)$, which follows from Lemma \ref{lem.max}, we have $g_1(\lambda'\xi)<g_1(\lambda\xi)$ for any $\xi\in(0,\xi_1)$, and it follows readily that
$$
g_{\lambda}(\xi)=\lambda^{-2m/(m-1)}g_1(\lambda\xi)>\lambda'^{-2m/(m-1)}g_1(\lambda'\xi)=g_{\lambda'}(\xi)
$$
and also
$$
\lim\limits_{\lambda\to0}g_{\lambda}(\xi)=\lim\limits_{\lambda\to0}\lambda^{-2m/(m-1)}g_1(\lambda\xi)=+\infty
$$
uniformly on $[0,\xi_{*}]$. Finally, we infer from the local behavior \eqref{beh.Q1} that
$$
g_1(\xi)\sim\left[\frac{m-p}{m(N-2)}(-\ln\,\xi)+K_1\right]^{m/(m-p)}, \qquad {\rm as} \ \xi\to0
$$
hence
\begin{equation}\label{interm11}
g_{\lambda}(\xi)\sim\lambda^{-2m/(m-1)}\left[\frac{m-p}{m(N-2)}(-\ln\,\xi)+K_1-\frac{m-p}{m(N-2)}\ln\,\lambda\right], \qquad {\rm as} \ \xi\to0.
\end{equation}
Since $\lambda\in(0,1)$, by letting $\lambda$ sufficiently small in \eqref{interm11} such that
$$
K_1-\frac{m-p}{m(N-2)}\ln\,\lambda>K_2,
$$
we get that $g_{\lambda}(\xi)>g_2(\xi)$ for $\xi$ in a right neighborhood of the origin. All the previous arguments prove that the optimal sliding parameter
\begin{equation}\label{opt}
\lambda_0=\sup\{\lambda\in(0,1): g_2(\xi)<g_{\lambda}(\xi), \ {\rm for \ any} \ \xi\in[0,\xi_{*}]\}
\end{equation}
is correctly defined and we easily derive from \eqref{interm8} and \eqref{opt} that $\lambda_0\in(0,1)$. We furthermore infer from the optimality of $\lambda_0$ that $g_2(\xi)\leq g_{\lambda_0}(\xi)$ for any $\xi\in(0,\xi_{*}]$ and that there exists $\xi^{*}\in(0,\xi_*]$ such that $g_2(\xi^*)=g_{\lambda_0}(\xi^*)$. Assume first that $\xi^*=\xi_*$. This means that
$$
g_{\lambda_0}(\xi^*)=g_{\lambda_0}(\xi_*)=\lambda_0^{-2m/(m-1)}g_1(\lambda_0\xi_*)>g_1(\xi_*)=g_2(\xi_*)=g_2(\xi^*),
$$
which is a contradiction. Since $\xi^*>0$, we deduce that $\xi^*\in(0,\xi_*)$ and in this case the function $g_{\lambda_0}-g_2$ has a minimum point at $\xi=\xi^*$, hence
\begin{equation}\label{interm12}
g_{\lambda_0}(\xi^*)=g_2(\xi^*), \ \ g_{\lambda_0}'(\xi^*)=g_2'(\xi^*), \ \ g_{\lambda_0}''(\xi^*)\geq g_2''(\xi^*).
\end{equation}
We then deduce from Eq. \eqref{interm9} solved by $g_2$, Eq. \eqref{interm10} solved by $g_{\lambda_0}$ and the inequalities \eqref{interm12} that
\begin{equation*}
\begin{split}
0&=g_2''(\xi^*)+\frac{N-1}{\xi^*}g_2'(\xi^*)+\frac{1}{2}\xi^*(g_2^{1/m})'(\xi^*)+(\xi^{*})^{-2}g_2(\xi^*)^{p/m}\\
&\leq g_{\lambda_0}''(\xi^*)+\frac{N-1}{\xi^*}g_{\lambda_0}'(\xi)+\frac{1}{2}\xi^*(g_{\lambda_0}^{1/m})'(\xi^*)+(\xi^*)^{-2}g_{\lambda_0}(\xi^*)^{p/m}\\
&=(\xi^*)^{-2}\left[1-\lambda_0^{2(p-m)/(m-1)}\right]g_{\lambda_0}(\xi^*)^{p/m}<0,
\end{split}
\end{equation*}
since $\lambda_0\in(0,1)$ and $2(p-m)/(m-1)<0$. We thus reach again a contradiction. It follows that there cannot be any crossing point $\xi_{*}\in(0,\xi_1)$ and thus the profiles remain ordered on the support of the smallest one.
\end{proof}
We still need one more preparatory lemma before going to the proof of the uniqueness of the self-similar solution.
\begin{lemma}\label{lem.super}
Let $\lambda\in(0,1)$ and $f_{\lambda}$ be the function obtained from a profile $f(\xi)$ solution to Eq. \eqref{SSODE} with interface (that is, corresponding to a parameter $C\in\mathcal{B}$) through the rescaling \eqref{resc}. Then the function
$$
U_{\lambda}(x,t)=f_{\lambda}(|x|t^{-1/2})
$$
is a supersolution to Eq. \eqref{eq1}.
\end{lemma}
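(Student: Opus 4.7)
The plan is to substitute $U_\lambda(x,t) = f_\lambda(|x|t^{-1/2})$ directly into the equation and exploit the rescaled profile ODE \eqref{interm10} already derived for $g_\lambda = f_\lambda^m$. Writing $\xi = |x| t^{-1/2}$, a direct differentiation gives
\begin{equation*}
\partial_t U_\lambda = -\frac{\xi}{2t}\, f_\lambda'(\xi), \qquad \Delta U_\lambda^m = \frac{1}{t}\left[(f_\lambda^m)''(\xi) + \frac{N-1}{\xi}(f_\lambda^m)'(\xi)\right],
\end{equation*}
so that
\begin{equation*}
\partial_t U_\lambda - \Delta U_\lambda^m = -\frac{1}{t}\left[(f_\lambda^m)''(\xi) + \frac{N-1}{\xi}(f_\lambda^m)'(\xi) + \frac{\xi}{2}\, f_\lambda'(\xi)\right].
\end{equation*}

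The key step is to read off the bracket from \eqref{interm10}. Since $g_\lambda = f_\lambda^m$, that ODE rewritten in terms of $f_\lambda$ states precisely that the bracket equals $-\lambda^{2(p-m)/(m-1)} \xi^{-2} f_\lambda(\xi)^p$. Using the identity $t^{-1}\xi^{-2} = |x|^{-2}$, this collapses to
\begin{equation*}
\partial_t U_\lambda - \Delta U_\lambda^m - |x|^{-2} U_\lambda^p = \left[\lambda^{2(p-m)/(m-1)} - 1\right] |x|^{-2} U_\lambda^p.
\end{equation*}
Since $\lambda \in (0,1)$ and the exponent $2(p-m)/(m-1)$ is strictly negative (as $1\leq p < m$ and $m>1$), we have $\lambda^{2(p-m)/(m-1)} > 1$, so the right-hand side is pointwise non-negative on the positivity set of $U_\lambda$, which delivers the supersolution inequality classically in the interior.

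The only genuine matter of care is the behavior across the free boundary $\{|x| = (\xi_0/\lambda)\, t^{1/2}\}$, where $f_\lambda$ vanishes. The interface estimate \eqref{beh.P1} shows that $U_\lambda^m$ is of class $C^1$ up to the interface with vanishing normal derivative there, and the reaction term is smooth away from the origin (which lies in the interior of the support of $U_\lambda(\cdot,t)$ for every $t>0$). Consequently, the pointwise inequality extends in the distributional sense against any non-negative test function in $C_0^{2,1}(\real^N \times (0,\infty))$ by the standard integration-by-parts across the interface, which is the notion of supersolution compatible with Definition \ref{def.sol}. I do not anticipate any real obstacle; the entire proof is essentially one calculation, and the only non-automatic point is recognizing that the reaction-type factor $\lambda^{2(p-m)/(m-1)} - 1$ carries the correct sign precisely because $p<m$.
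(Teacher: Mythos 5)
Your proof is correct and takes essentially the same route as the paper: a direct pointwise computation (here organized via the rescaled profile equation \eqref{interm10}) yielding the remainder $\bigl(\lambda^{2(p-m)/(m-1)}-1\bigr)|x|^{-2}U_{\lambda}^p>0$, which is exactly the paper's identity. Your handling of the free boundary via the vanishing of $(f_{\lambda}^m)'$ at the edge of the support matches the paper's observation that the contact condition $(f^m)'(\xi_0)=0$ is invariant under the rescaling \eqref{resc}.
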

\begin{proof}
A direct calculation gives
$$
\partial_tU_{\lambda}-\Delta U_{\lambda}^m-|x|^{-2}U_{\lambda}^p=\frac{1}{t}\xi^{-2}f_{\lambda}(\xi)^{p}\left(\lambda^{2(p-m)/(m-1)}-1\right)>0,
$$
since $\lambda\in(0,1)$ and $2(p-m)/(m-1)<0$. The conclusion follows from the fact that the contact condition $(f^m)'(\xi_0)=0$ at the edge of the support $\xi_0\in(0,\infty)$ remains invariant with respect to the rescaling \eqref{resc}.
\end{proof}
The monotonicity given by Lemma \ref{lem.monot} doesn't yet give the uniqueness of the profiles with $C\in\mathcal{B}$, since there exists still the possibility of a contact exactly at the edge of the support. In order to remove this contact, we go back to the full self-similar solutions as follows.
\begin{proof}[Proof of Theorem \ref{th.uniqSS}: uniqueness]
Assume for contradiction that there are two parameters $C_1$, $C_2\in\mathcal{B}$ corresponding to two self-similar profiles $f(\cdot;K_1)$ and $f(\cdot;K_2)$ such that $-\infty<K_1<K_2<\infty$. We obtain from Lemma \ref{lem.monot} that the two profiles are totally ordered on the support of the smallest one. Letting $\xi_1$, respectively $\xi_2$ be the edges of the supports of $f(\cdot;K_1)$, respectively $f(\cdot;K_2)$, we find that $f(\xi;K_1)<f(\xi;K_2)$ for any $\xi\in(0,\xi_1]$ and that $\xi_1<\xi_2$. Consider now the rescaling \eqref{resc} and the optimal sliding parameter $\lambda_0\in(0,1)$ introduced in \eqref{opt} for the functions $g_i(\xi)=f(\xi;K_i)^m$, $i=1,2$. Since the proof of Lemma \ref{lem.monot} already gives that no contact between $g_{\lambda_0}$ and $g_2$ is possible for any $\xi\in(0,\xi_1)$, it follows that the only possible contact lies at the edge of the support $\xi=\xi_2$, hence
$$
0=f(\xi_2;K_2)=f_{\lambda_0}(\xi_2), \qquad 0<f(\xi;K_2)<f_{\lambda_0}(\xi) \ {\rm for \ any} \ \xi\in(0,\xi_2).
$$
We reintroduce the time variable and construct the functions
$$
U_2(x,t)=f(|x|t^{-1/2};K_2), \qquad U_{\lambda_0}(x,t)=f_{\lambda_0}(|x|t^{-1/2}),
$$
and notice that $U_2$ is a solution to Eq. \eqref{eq1}, while $U_{\lambda_0}$ is a supersolution to Eq. \eqref{eq1} according to Lemma \ref{lem.super}. We next \emph{separate the supports} of these functions by giving a small time delay to the bigger function in order to remove the contact at $\xi=\xi_2$ and then adjust the scaling parameter. More precisely, we start with $t=1$, where we have a precise identification of the function and the self-similar profile, to notice that for any $\delta>0$ sufficiently small we have
$$
U_2(x,1)\leq U_{\lambda_0}(x,1)<U_{\lambda_0}(x,1+\delta),
$$
which gives
$$
U_2(x,1)<\lambda_0^{-2/(m-1)}f(|x|\lambda_0(1+\delta)^{-1/2};K_1), \qquad |x|\in(0,\xi_2]
$$
and with a strict separation also in a right neighborhood of the origin as it follows from the proof of Lemma \ref{lem.monot}. We can then adjust the scaling parameter by letting some $\lambda_1\in(\lambda_0,1)$ sufficiently close to $\lambda_0$ in order to satisfy simultaneously the following conditions
\begin{equation}\label{interm13}
U_2(x,1)<\lambda_1^{-2/(m-1)}f(|x|\lambda_1(1+\delta)^{-1/2};K_1)=U_{\lambda_1}(x,1+\delta),
\end{equation}
for any $x\in\real^N$ such that $|x|\in(0,\xi_2]$, and
\begin{equation}\label{interm14}
\begin{split}
\frac{m-p}{m(N-2)}(-\ln\,|x|)+K_2&<\lambda_1^{-2(m-p)/(m-1)}\left[\frac{m-p}{m(N-2)}(-\ln\,|x|)+K_2\right.\\&\left.-\frac{m-p}{m(N-2)}\ln\,\lambda_1\right],
\end{split}
\end{equation}
for any $x\in\real^N$ such that $|x|\leq\epsilon$ for some $\epsilon>0$ fixed. The latter follows easily from the fact that $\lambda_1\in(0,1)$ and the dominant term (the one containing $\ln\,|x|$) is exactly the same in both sides. We next deduce from \eqref{interm14} that the two functions $U_2$ and $U_{\lambda_1}$ separate even more in the right neighborhood $|x|\in(0,\epsilon]$ of the origin at any later times $t>1$, since
\begin{equation*}
\begin{split}
\frac{m-p}{m(N-2)}(-\ln\,|x|)+K_2&+\frac{m-p}{2m(N-2)}\ln\,t<\lambda_1^{-2(m-p)/(m-1)}\left[\frac{m-p}{m(N-2)}(-\ln\,|x|)+K_2\right.\\&\left.-\frac{m-p}{m(N-2)}\ln\,\lambda_1\right]+
\frac{m-p}{2m(N-2)}\lambda_1^{-2(m-p)/(m-1)}\ln(t+\delta),
\end{split}
\end{equation*}
for any $t>1$ and $x\in\real^N$ such that $0<|x|\leq\epsilon$. In particular, the latter implies that $U_2(x,t)<U_{\lambda_1}(x,t+\delta)$ for any $x\in\real^N$ such that $|x|=\epsilon$ and $t\in(1,\infty)$. We then apply the comparison principle (see for example \cite{ASZ01}) in the exterior domain $(\real^N\setminus B(0,\epsilon))\times(1,\infty)$ (where the weight is uniformly bounded) together with the estimate \eqref{interm14} to conclude that $U_2(x,t)<U_{\lambda_1}(x,t+\delta)$ for any $(x,t)\in\real^N\setminus\{0\}\times(1,\infty)$. We thus find that
$$
f(|x|t^{-1/2};K_2)\leq\lambda_1^{-2/(m-1)}f(\lambda_1|x|(t+\delta)^{-1/2};K_1),
$$
for any $(x,t)\in\real^N\setminus\{0\}\times(1,\infty)$, or equivalently
$$
f(\xi;K_2)\leq\lambda_1^{-2/(m-1)}f\left(\lambda_1\xi\left(\frac{t}{t+\delta}\right)^{-1/2};K_1\right),
$$
for any $\xi\in(0,\xi_2]$ and $t>1$. By passing to the limit as $t\to\infty$ we immediately reach a contradiction with the optimality of the scaling parameter $\lambda_0$ introduced in \eqref{opt}. This contradiction implies the uniqueness of the self-similar profile with interface and the proof is complete.
\end{proof}

\section{Existence for general initial conditions}\label{sec.wp}

This section is devoted to the proof of Theorem \ref{th.exist}. The main tools in the proof are the construction of an approximating sequence of solutions to regular problems obtained by a mollification of Eq. \eqref{eq1} near $x=0$ together with a comparison with the unique self-similar solution obtained from Theorem \ref{th.uniqSS}. Let us denote by
\begin{equation}\label{uniq.SSS}
U(x,t)=f(|x|t^{-1/2}), \qquad (x,t)\in\real^N\times(0,\infty)
\end{equation}
the unique self-similar solution given by Theorem \ref{th.uniqSS}. We start with the following rather obvious preparatory result.
\begin{lemma}\label{lem.comp}
Let $u_0$ be a compactly supported function as in \eqref{init.cond}. Then there exists $\tau>0$ such that
$$
u_0(x)\leq U(x,\tau),
$$
for any $x\in\real^N$.
\end{lemma}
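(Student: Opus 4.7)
The plan is to exploit the two extreme behaviors of the self-similar profile $f$ from Theorem \ref{th.uniqSS}: it is supported on $[0,\xi_0]$ and vanishes outside, but blows up logarithmically at $\xi=0$ according to \eqref{beh.Q1}. Both of these can be dilated by choosing $\tau$ large, since $U(x,\tau)=f(|x|\tau^{-1/2})$ spreads its support to $B(0,\xi_0\tau^{1/2})$ and its central height tends to $+\infty$ on any fixed ball. So the strategy is just to pick $\tau$ large enough that $U(\cdot,\tau)$ dominates $u_0$ both in support and in amplitude.

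More concretely, set $M=\|u_0\|_{\infty}<\infty$. First I would require
\[
\tau\geq \frac{R^2}{\xi_0^2},
\]
so that $R\tau^{-1/2}\leq \xi_0$ and $U(\cdot,\tau)$ is positive throughout $B(0,R)$. Next, invoking the local behavior \eqref{beh.Q1} together with the fact (established at the end of Section \ref{sec.exist}) that $f$ is monotone decreasing on $[0,\xi_0]$, the quantity $f(R\tau^{-1/2})$ tends to $+\infty$ as $\tau\to\infty$. Therefore I can further enlarge $\tau$ so that
\[
f\bigl(R\tau^{-1/2}\bigr)\geq M.
\]
With such $\tau$ fixed, monotonicity of $f$ gives, for any $x$ with $|x|\leq R$,
\[
U(x,\tau)=f(|x|\tau^{-1/2})\geq f(R\tau^{-1/2})\geq M\geq u_0(x),
\]
while for $|x|>R$ the assumption $\operatorname{supp}u_0\subseteq B(0,R)$ trivially yields $u_0(x)=0\leq U(x,\tau)$.

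There is no real obstacle here: the lemma is a direct consequence of the geometry of the self-similar solution (vertical asymptote at the origin, compactly supported in the spatial variable for each $t>0$) combined with the boundedness and compact support of $u_0$. The only point worth emphasizing in the write-up is that the radial monotonicity of $f$ on $[0,\xi_0]$ — a by-product of Lemma \ref{lem.max} and the analysis of Section \ref{sec.exist} — is what reduces the pointwise domination on $B(0,R)$ to the single scalar inequality $f(R\tau^{-1/2})\geq M$.
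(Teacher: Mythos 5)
Your proof is correct and follows essentially the same route as the paper: choose $\tau$ large so that $R\tau^{-1/2}$ falls in a region near $\xi=0$ where $f$ exceeds $\|u_0\|_{\infty}$ (guaranteed by the vertical asymptote \eqref{beh.Q1}), then compare on $B(0,R)$ and use ${\rm supp}\,u_0\subseteq B(0,R)$ outside. The only cosmetic difference is that the paper gets the bound on $B(0,R)$ directly from the divergence of $f$ at the origin (picking $R_0$ with $f>\|u_0\|_\infty$ on $(0,R_0)$) rather than via the radial monotonicity of $f$, so your extra appeal to monotonicity and to $\tau\geq R^2/\xi_0^2$ is harmless but not needed.
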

\begin{proof}
Let $R>0$ such that ${\rm supp}\,u_0\subseteq B(0,R)$. Since
$$
\lim\limits_{x\to0}U(x,t)=+\infty, \qquad {\rm for \ any} \ t>0
$$
it follows that there exists $R_0>0$ such that $U(x,1)=f(|x|)>\|u_0\|_{\infty}$ for any $x\in\real^N$ with $0<|x|<R_0$. Let then $\tau$ be sufficiently large such that $R\tau^{-1/2}<R_0$. Then for any $x\in\real^N$ such that $0<|x|<R$ we have $|x|\tau^{-1/2}<R_0$, hence
$$
U(x,\tau)=f(|x|\tau^{-1/2})>\|u_0\|_{\infty}\geq u_0(x),
$$
for any $x\in B(0,R)$ (where comparison is extended at $x=0$ trivially since the self-similar solution is infinite there). Since $u_0(x)=0$ for $|x|\geq R$, the conclusion follows.
\end{proof}
We are now ready to complete the proof of the existence theorem.
\begin{proof}[Proof of Theorem \ref{th.exist}]
Let $u_0$ be as in \eqref{init.cond}. For any $\epsilon>0$, consider the Cauchy problem associated to the following regularized equation near $x=0$
\begin{equation}\label{eq.eps}
\partial_tu-\Delta u^m-(|x|+\epsilon)^{-2}u^p=0,
\end{equation}
with initial condition $u(x,0)=u_0(x)$. Since the coefficient of the zero order term in Eq. \eqref{eq.eps} belongs to $L^{\infty}(\real^N)$, standard theory for quasilinear parabolic equations (see for example \cite[Section 8, Chapter 5]{LSU} and \cite[Sections 3 and 5]{AdB91}, in the latter Eq. \eqref{eq.eps} being studied for $\epsilon=1$ but the same argument holds true with any $\epsilon>0$) gives that there exists a unique solution $u_{\epsilon}$ to the Cauchy problem \eqref{eq.eps}-\eqref{init.cond} and moreover, the comparison principle holds true for Eq. \eqref{eq.eps}. In particular, the solution $u_{\epsilon}$ fulfills the very weak formulation of Eq. \eqref{eq.eps}, which means that, in particular, for any $\varphi\in C_0^{2,1}(\real^N\times(0,T))$ and for any $t_1$, $t_2\in(0,T)$ with $t_1<t_2$ we have
\begin{equation}\label{weak.eps}
\begin{split}
\int_{\real^N}u_{\epsilon}(t_2)\varphi(t_2)\,dx&-\int_{\real^N}u_{\epsilon}(t_1)\varphi(t_1)\,dx-\int_{t_1}^{t_2}\int_{\real^N}u_{\epsilon}(t)\varphi_t(t)\,dx\,dt\\
&-\int_{t_1}^{t_2}\int_{\real^N}u_{\epsilon}^m(t)\Delta\varphi(t)\,dx\,dt=\int_{t_1}^{t_2}\int_{\real^N}\frac{u_{\epsilon}^p(t)}{(|x|+\epsilon)^2}\varphi(t)\,dx\,dt.
\end{split}
\end{equation}
and the initial condition is taken in $L^1$ sense. We furthermore observe that for any $0<\epsilon_1<\epsilon_2<\infty$, the solution $u_{\epsilon_1}$ to Eq. \eqref{eq.eps} with $\epsilon=\epsilon_1$ is a supersolution to Eq. \eqref{eq.eps} with $\epsilon=\epsilon_2$, since
$$
(|x|+\epsilon_1)^{-2}>(|x|+\epsilon_2)^{-2}.
$$
The comparison principle applied for Eq. \eqref{eq.eps} with $\epsilon=\epsilon_2$ then entails that $u_{\epsilon_1}\geq u_{\epsilon_2}$ if $\epsilon_1<\epsilon_2$. We thus obtain a monotone sequence of functions, hence there exists a pointwise limit (which a priori might be equal to $+\infty$)
\begin{equation}\label{interm16}
u(x,t)=\lim\limits_{\epsilon\to0}u_{\epsilon}(x,t).
\end{equation}
Let us now consider the time delay $\tau>0$ given by Lemma \ref{lem.comp} such that $U(x,\tau)\geq u_0(x)$ for any $x\in\real^N$, where $U(x,t)$ is the unique self-similar solution to Eq. \eqref{eq1} given by Theorem \ref{th.uniqSS}. It is immediate to check that $U$ is a supersolution to Eq. \eqref{eq.eps} for any $\epsilon>0$. The comparison principle applied to Eq. \eqref{eq.eps} leads to $u_{\epsilon}(x,t)\leq U(x,t+\tau)$, for any $x\in\real^N$, $t>0$. We then infer that the limit function
$$
u(x,t)=\lim\limits_{\epsilon\to0}u_{\epsilon}(x,t)\leq U(x,t+\tau),
$$
for any $(x,t)\in\real^N\times(0,\infty)$. In particular, we find that $u(x,t)$ is globally defined (in time), and that it belongs to the functional spaces in Definition \ref{def.sol}, as being bounded from above by the function $U(x,t+\tau)$ which belongs to these integral spaces. Moreover, since the integrals in \eqref{weak.eps} are well defined in dimension $N\geq3$ if replacing $u_{\epsilon}$ by $U(\cdot,\cdot+\tau)$, we can pass to the limit in the weak formulation \eqref{weak.eps} by using the Lesbesgue's dominated convergence theorem in order to find that the limit solution $u(x,t)$ defined in \eqref{interm16} satisfies the weak formulation \eqref{weak} corresponding to Eq. \eqref{eq1}. An easy argument based on the monotone convergence theorem also gives that $u_0$ is taken in $L^1$ sense by $u$ as limit when $t\to0$. We can thus conclude that the function $u$ defined in \eqref{interm16} is a weak solution to the Cauchy problem \eqref{eq1}-\eqref{init.cond}.
\end{proof}

\noindent \textbf{Remark.} The weak formulation we are using in this work avoids the problems of having to pass to the limit on the gradients of the approximating functions, as it would have happened if considering the more standard theory where only one integration by parts in the term with the Laplacian is performed (instead of two). We believe that the same function $u$ introduced in \eqref{interm16} will also satisfy this slightly different weak formulation, but its proof requires more work by establishing bounds on the gradients of $u_{\epsilon}$. We refrain from entering this discussion here.

\section*{Extensions, comments and open problems}

The results in this paper represent only a first step towards the general theory of Eq. \eqref{eq1}. Indeed, our aim was not to construct a full theory of well-posedness of Eq. \eqref{eq1}, but the more modest objective of revealing an interesting behavior of a special, unique solution in self-similar form to this equation and then show how an immediate application of it is the existence of at least a global solution for any compactly supported and bounded initial condition. This is already very noticeable, as it is in striking contrast with the cases $m=p=1$ or $m=p>1$ where this existence is limited by the Hardy optimal constant. Of course, there are many new developments that can be considered as further interesting problems related to Eq. \eqref{eq1} and we suggest below some points to be addressed in the future.

\medskip

$\bullet$ \emph{uniqueness of solutions} and comparison principle to Eq. \eqref{eq1}. Recall that in Section \ref{sec.wp} we have constructed the weak solutions as limits of an approximating process. Although this strongly suggests that it is not likely to have a different process leading to different solutions, there is no proof of uniqueness and comparison for solutions to Eq. \eqref{eq1}. This problem is completely non-trivial, since in the neighboring case of letting $\sigma\in(-2,0)$, we have shown recently in \cite{IMS21b} that uniqueness does not hold true by constructing a nonzero solution stemming from zero initial condition, in the form of a forward self-similar solution. In our case, we have the ``feeling" that uniqueness of solutions should hold true, but a proof of it does not seem obvious.

\medskip

$\bullet$ \emph{instantaneous vertical asymptote at $x=0$}. Another open question suggested by the present work is to establish whether a solution $u(x,t)$ with initial condition $u_0$ as in \eqref{init.cond} instantaneously blows up (by forming a vertical asymptote) at $x=0$. In the limiting case $p=m$ with $\sigma=-2$, such an interesting instantaneous blow-up of a wide class of solutions was established by the authors in \cite{IS20b}, together with the interesting point (similar to our case) that this blow-up does not stop the solutions to stay in the suitable functional spaces at any $t>0$. A similar thing is expected to happen here. However, since comparison is not yet available and the analysis of the self-similar profiles does not give a class of ``small" subsolutions to Eq. \eqref{eq1}, estimates pushing $u$ to increase very rapidly near $x=0$ are to be obtained.

\medskip

$\bullet$ \emph{large time behavior of solutions}. This is a very natural question once we have a very strong candidate for the large time behavior, in the form of a unique self-similar solution with some specific behavior. One can readily see that Lemma \ref{lem.comp} also provides, in terms of self-similar variables, an optimal upper bound for the general solutions. We then miss a good lower bound that has to be obtained in a different way, probably using energy-like estimates and functional inequalities, in order to establish that the self-similar scale is the right one for the dynamics of the equation. Moreover, a step of establishing compactness (or Holder estimates for the solutions) is required for convergence.

\bigskip

\noindent \textbf{Acknowledgements} R. I. and A. S. are partially supported by the Spanish project PID2020-115273GB-I00.

\bibliographystyle{plain}

\end{document}